\def\BA@fnsymbol#1{\ensuremath{%
  \ifcase#1\or *\or \dagger\or \ddagger\or \mathsection\or \mathparagraph\or \|\or **\or \dagger\dagger\or \ddagger\ddagger \else@ctrerr\fi}}%
\theoremstyle{plain}
\newtheorem{theorem}{Theorem}[section]
\newtheorem{proposition}[theorem]{Proposition}%
\newtheorem{lemma}[theorem]{Lemma}
\theoremstyle{definition}
\newtheorem{assumption}[theorem]{Assumption}
\theoremstyle{remark}
\newtheorem{remark}[theorem]{Remark}%
\DeclareMathOperator*{\dom}{dom}
\DeclareMathOperator*{\dist}{dist}
\DeclareMathOperator*{\vol}{vol}
\DeclareMathOperator{\subjectto}{subject~to}
\newcommand{\argmin}{\mathop{\rm arg~min}\limits}
\newcommand{\minimize}{\mathop{\rm minimize}\limits}
\newcommand{\relmiddle}[1]{\mathrel{}\middle#1\mathrel{}}
\newcommand{\abs}[1]{\lvert #1 \rvert} 
\newcommand{\norm}[1]{\lVert #1 \rVert} 
\newcommand{\bbK}{\mathbb{K}}
\newcommand{\bbR}{\mathbb{R}}
\newcommand{\bbV}{\mathbb{V}}
\newcommand{\calS}{\mathcal{S}}
\newcommand{\COP}{\mathcal{COP}}
\newcommand{\RNum}[1]{\uppercase\expandafter{\romannumeral #1\relax}} 
\newcommand{\Rnum}[1]{\lowercase\expandafter{\romannumeral #1\relax}} 
\title{Inexact subgradient algorithm with a non-asymptotic convergence guarantee for copositive programming problems}
\let\@fnsymbol\@arabic
\author{
\normalsize
    Mitsuhiro Nishijima\thanks{Department of Industrial and Systems Engineering, Keio University, 3-14-1 Hiyoshi, Kohoku-ku, Yokohama-shi, 2238522, Kanagawa, Japan. ({\tt nishijima@keio.jp}).
}
\and
\normalsize
    Pierre-Louis Poirion\thanks{Center for Advanced Intelligence Project, RIKEN, 1-4-1, Nihonbashi, Chuo-ku, 1030027, Tokyo, Japan. ({\tt pierre-louis.poirion@riken.jp}).
}
\and
\normalsize
        Akiko Takeda\thanks{Graduate School of Information Science and Technology, The University of Tokyo, 7-3-1 Hongo, Bunkyo-ku, 1138656, Tokyo, Japan; Center for Advanced Intelligence Project, RIKEN, 1-4-1, Nihonbashi, Chuo-ku, 1030027, Tokyo, Japan ({\tt takeda@mist.i.u-tokyo.ac.jp}).
}
}
\begin{document}
\maketitle

\begin{abstract}\noindent
In this paper, we propose a subgradient algorithm with a non-asymptotic convergence guarantee to solve copositive programming problems.
The subproblem to be solved at each iteration is a standard quadratic programming problem, which is NP-hard in general.
However, the proposed algorithm allows this subproblem to be solved inexactly.
For a prescribed accuracy $\epsilon > 0$ for both the objective function and the constraint arising from the copositivity condition, the proposed algorithm yields an approximate solution after $O(\epsilon^{-2})$ iterations, even when the subproblems are solved inexactly.
We also discuss exact and inexact approaches for solving standard quadratic programming problems and compare their performance through numerical experiments.
In addition, we apply the proposed algorithm to the problem of testing complete positivity of a matrix and derive a sufficient condition for certifying that a matrix is not completely positive.
Experimental results demonstrate that we can detect the lack of complete positivity in various doubly nonnegative matrices that are not completely positive.
\end{abstract}
\vspace{0.5cm}

\noindent
{\bf Key words. }Copositive programming, Semi-infinite programming, Subgradient-based method, Standard quadratic programming, Completely positive matrices
%

\section{Introduction}\label{sec:intro}
A copositive programming problem is a conic linear programming problem involving a copositive cone.
Many studies have shown that various computationally challenging optimization problems can be reformulated as copositive programming problems in a unified manner~\cite{BDd+2000,BG2023,Burer2009}.
Motivated by this, researchers have proposed various approaches to solving copositive programming problems.

Approximations for copositive cones are widely used to solve copositive programming problems.
Typically, the cone consisting of sums of a real symmetric positive semidefinite matrix and a symmetric entrywise nonnegative matrix provides an inner approximation to a copositive cone.
However, except in a few specific situations~\cite{GY2022,KKT2020}, it is difficult to assess the accuracy of this approximation theoretically.
To approximate copositive cones to arbitrary accuracy, various approximation hierarchies have been proposed~\cite{AM2019,dP2002,Lasserre2014,LV2022,Parrilo2000_Semidefinite,PVZ2007,SBD2012,Yildirim2012}.
Bundfuss and D\"{u}r~\cite{BD2009} provided an adaptive approximation scheme for copositive cones based on a simplicial partition and showed its asymptotic convergence.
\v{Z}ilinskas~\cite{Zilinskas2011} also utilized a simplicial partition to solve a copositive programming problem with a single scalar variable.
Moreover, cutting-plane methods have been used to solve copositive programming problems~\cite{Bd2022_An,GBT2026}, although they lack theoretical results on asymptotic or non-asymptotic convergence.

Copositive programming is a subclass of convex semi-infinite programming, so we can apply methods for solving convex semi-infinite programming problems to copositive programming problems~\cite{GL2018}.
Ahmed, D\"{u}r, and Still~\cite{ADS2013} interpreted the approximation schemes for copositive programming proposed in \cite{Bd2002,BD2009} as discretization methods for semi-infinite programming problems and derived their asymptotic convergence rates.
More recently, researchers have established theoretical results for copositive programming through the lens of semi-infinite programming, including strong duality, optimality conditions, and representations of the faces of copositive cones~\cite{GRV20XX,KT2020,KT2022_On_equivalent,KT2022_On_strong,KTD2020}.

In this paper, we also reformulate copositive programming problems as convex semi-infinite programming problems, specifically as convex programming problems with a single nonsmooth functional constraint, and propose a subgradient algorithm (Algorithm~\ref{alg:proposed}) to solve them.
The bottleneck of the proposed algorithm is solving an NP-hard subproblem at each iteration, but it allows the subproblem to be solved inexactly.
The proposed algorithm is based on the subgradient algorithm in \cite[Equation~(3.2.24)]{Nesterov2018}.
If we apply the subgradient algorithm in \cite[Equation~(3.2.24)]{Nesterov2018} to the convex semi-infinite programming problem directly, we have to solve the subproblem exactly to find a constraint that attains the maximal violation.
The subproblem is nonconvex in general and solving it exactly is sometimes demanding, especially for large-scale problems.

In particular, when we apply the proposed algorithm to a copositive programming problem, the subproblem at each iteration reduces to a standard quadratic programming problem.
It is well known that this problem is NP-hard~\cite{MK1987}.
As discussed in Section~\ref{sec:COPP}, there are several approaches to solving this subproblem either exactly or inexactly.
To solve the subproblem exactly, the subproblem can be reformulated as a mixed-integer linear programming problem whose size depends only on the size of the copositive programming problem~\cite{GY2021}.
To solve the subproblem inexactly, one may employ the polynomial-time approximation scheme~\cite{Bd2002}, which requires evaluating only a finite number of objective values and does not require external solvers, as well as its randomized variant.
In addition, we compare their performance through numerical experiments.
The results show that exact computation is acceptable when the size of the problem is small, whereas inexact computation becomes a viable alternative as the size increases.
In contrast to the proposed algorithm, existing approaches for copositive programming problems typically require solving potentially large-scale problems with external solvers.
Many of these methods solve linear programming or semidefinite programming problems as approximations.
The higher the desired accuracy, the larger the variable dimension and the number of constraints in the approximate problems, regardless of the size of the original copositive programming problems.

Moreover, we establish a theoretical non-asymptotic convergence result for the proposed algorithm, which also covers the case where the subproblems are solved inexactly.
In Theorem~\ref{thm:convergence_SIP}, we provide an explicit iteration-complexity bound that guarantees that an approximate solution achieves a given accuracy.
Specifically, for a prescribed accuracy $\epsilon > 0$ with respect to both the objective function and the constraint induced by copositivity, the proposed algorithm yields an approximate solution after $O(\epsilon^{-2})$ iterations.
In contrast, most existing methods either lack theoretical guarantees on the accuracy of approximate solutions or only provide asymptotic convergence.

Our algorithm is closely related to the subgradient algorithms proposed by Nesterov~\cite{Nesterov2004}, Beck et al.~\cite{BBG+2010}, and Wei, Haskell, and Zhao~\cite{WHZ2020} for convex semi-infinite programming problems.
In particular, the algorithm presented by Wei, Haskell, and Zhao~\cite{WHZ2020} also permits inexact computation of subproblems.
The four algorithms differ in the rules for selecting the objective or constraint function to improve at each iteration, choosing the step size, and computing the solution to output.
These differences yield a smaller iteration-complexity bound for our algorithm to achieve a solution within a given accuracy than the others; see Table~\ref{tab:compare_num_iter} for details.
Additionally, our algorithm does not rely on a compactness assumption on the constraint set, whereas the other algorithms use a parameter that depends on such an assumption.

Furthermore, we apply the proposed algorithm to testing complete positivity of a matrix.
Determining whether a matrix is completely positive is NP-hard in general settings~\cite{DG2014}, and developing practical methods for this task has been recognized as an important problem~\cite{BDS2015}.
Various approaches have been proposed to detect complete positivity, some of which rely on specific matrix structures~\cite{DD2012,Xu2001}, while others can handle unstructured matrices~\cite{BR2006,JS2009,Nie2014,Bd2022_An}.
Following~\cite{Bd2022_An}, we formulate the problem of testing complete positivity as a copositive programming problem with a ball constraint.
By applying the proposed algorithm to this formulation, we obtain a sufficient condition ensuring that an input matrix is not completely positive (Theorem~\ref{thm:suff_cond_not_CP}).
Our method does not require any structural assumptions on the input matrix.
In addition, numerical experiments demonstrate that we can detect that certain doubly nonnegative matrices are not completely positive.

The organization of this paper is as follows.
In Section~\ref{sec:preliminaries}, we introduce and recall the notation and concepts used in this paper.
In Section~\ref{sec:SIP}, we propose an inexact subgradient algorithm for solving general convex semi-infinite programming problems and show its non-asymptotic convergence.
In Section~\ref{sec:COPP}, we apply the proposed algorithm to general and specific copositive programming problems and discuss how to implement the algorithm.
In particular, we present several approaches to solving the subproblem, a standard quadratic programming problem, that needs to be solved at each iteration.
In Section~\ref{sec:experiment}, we verify the effectiveness of the methods introduced in the previous sections through numerical experiments.
Finally, Section~\ref{sec:conclusion} concludes the paper.

\section{Preliminaries}\label{sec:preliminaries}
For a positive integer $n$, we define $[n] \coloneqq \{1,\dots,n\}$.
For a finite set $C$, we use $\abs{C}$ to denote the number of elements in $C$.
For a real number $a$, we use $\abs{a}$ to denote the absolute value of $a$.

Let $V$ be a finite-dimensional real vector space equipped with a norm $\norm{\cdot}$ induced by an inner product denoted by $\langle \cdot,\cdot \rangle$.
We define
\begin{equation*}
B(x,r) \coloneqq \{y\in V \mid \norm{x-y} \le r\}
\end{equation*}
as the closed ball with center $x \in V$ and radius $r>0$.
Let $S$ be a non-empty closed convex subset of $V$.
For $x\in V$, the distance between the point $x$ and the set $S$ is defined as
\begin{equation*}
\dist(x,S) \coloneqq \min_{y\in S}\norm{x-y}.
\end{equation*}
The point $y^* \in S$ that attains $\dist(x,S) = \norm{x-y^*}$ is uniquely determined.
We call the point $y^*$ the \emph{projection} of $x$ onto $S$ and denote it by $P_S(x)$.
Let $f\colon V \to (-\infty,+\infty]$ be a convex function.
Let $\dom f \coloneqq \{x \in V \mid f(x) < +\infty\}$ denote the effective domain of $f$.
For $x\in \dom f$, the \emph{subdifferential} of $f$ at $x$ is defined by
\begin{equation*}
\partial f(x) \coloneqq \{d \in V \mid f(y) \ge f(x) + \langle d, y-x\rangle \text{ for all $y\in V$}\}.
\end{equation*}
To specify the variable, we may write $\partial_x f(x)$ for $\partial f(x)$.
The function $f$ is \emph{subdifferentiable} on a set $C$ in $V$ if $\partial f(x)$ is non-empty for all $x\in C$.

We use boldface lowercase letters such as $\bm{a}$ to denote vectors.
For a vector $\bm{a}$, we denote its transpose by $\bm{a}^\top$ and its $i$th element by $a_i$.
We use $\bbR^n$ and $\bbR_+^n$ to denote the space of $n$-dimensional real vectors and the set of entrywise nonnegative vectors in $\bbR^n$, respectively.
The space $\bbR^n$ is equipped with the standard inner product and the $2$-norm, given by $\norm{\bm{a}}_2 \coloneqq \sqrt{\bm{a}^\top \bm{a}}$ for $\bm{a}\in \bbR^n$.
The zero vector is written as $\bm{0}$.
We use $\bm{e}_i$ to denote the vector whose $i$th element is $1$ and the other elements are $0$.
The $(n-1)$-dimensional standard simplex in $\bbR^n$ is defined as
\begin{equation*}
\Delta^{n-1} \coloneqq \left\{\bm{\delta} \in \bbR_+^n \relmiddle| \sum_{i=1}^n \delta_i = 1\right\}.
\end{equation*}

We use boldface uppercase letters such as $\bm{A}$ to denote matrices.
We write $A_{ij}$ for the $(i,j)$th element of a matrix $\bm{A}$.
We define $\calS^n$ to be the space of real $n\times n$ symmetric matrices.
The space $\calS^n$ is equipped with the inner product defined as $\langle \bm{A},\bm{B} \rangle \coloneqq \sum_{i,j=1}^n A_{ij}B_{ij}$ for $\bm{A},\bm{B} \in \calS^n$ and the induced Frobenius norm defined as $\norm{\bm{A}}_{\rm F} \coloneqq \sqrt{\langle \bm{A},\bm{A} \rangle}$.
The zero matrix is written as $\bm{O}$.
A matrix $\bm{A} \in \calS^n$ is called \emph{copositive} if $\bm{x}^\top\bm{A}\bm{x} \ge 0$ holds for all $\bm{x} \in \bbR_+^n$.
The cone of copositive matrices, the \emph{copositive cone} for short, in $\calS^n$ is denoted by $\COP^n$.

\section{An inexact subgradient algorithm for convex semi-infinite programming problems}\label{sec:SIP}
A copositive programming problem can be regarded as a convex semi-infinite programming problem.
In this section, we propose an inexact subgradient algorithm for general convex semi-infinite programming problems and establish a non-asymptotic convergence guarantee.

The semi-infinite programming problem considered here is of the form
\begin{equation}
\begin{alignedat}{3}
&\minimize_{x\in V} && \quad f(x)\\
&\subjectto && \quad g(x;\bm{\delta}) \le 0\ \text{for all $\bm{\delta} \in \Delta$},\\
&&& \quad x\in S.
\end{alignedat}\label{prob:SIP}
\end{equation}
Throughout this section, we make the following assumptions on the components of Problem~\eqref{prob:SIP}.

\begin{assumption} \label{asm:SIP}
\leavevmode
\begin{enumerate}[(a)]
\item $V$ is a finite-dimensional real vector space.
The space $V$ is equipped with an inner product $\langle \cdot,\cdot\rangle$, and we denote by $\norm{\cdot}$ the norm induced by the inner product.
\item $S$ is a non-empty closed convex subset of $V$.
\item The set of optimal solutions of \eqref{prob:SIP}, denoted by $S^*$, is non-empty.
\item $\Delta$ is a non-empty compact subset of $\bbR^n$.
\item $f\colon V \to (-\infty,+\infty]$ is a closed convex function that is subdifferentiable on $S$.
Moreover, there exists a positive constant $L_f$ such that $\lVert d\rVert \le L_f$ for all $x \in S$ and $d \in \partial f(x)$. \label{asm:f}
\item For every $\bm{\delta} \in \Delta$, $g(\cdot;\bm{\delta})\colon  V \to (-\infty,+\infty]$ is a closed convex function that is subdifferentiable on $S$.
Moreover, there exists a positive constant $L_g$ such that $\lVert d\rVert \le L_g$ for all $x \in S$, $\bm{\delta} \in \Delta$, and  $d \in \partial_x g(x;\bm{\delta})$. \label{asm:g_y}
\item For every $x \in S$, the inclusion $\Delta \subseteq \dom g(x;\cdot)$ holds and the function $g(x;\cdot)$ is continuous on $\Delta$. \label{asm:g_delta}
\end{enumerate}
For the constants $L_f$ and $L_g$ that appear in \eqref{asm:f} and \eqref{asm:g_y}, respectively, we define $L \coloneqq \max\{L_f,L_g\}$.
Furthermore, the optimal value of \eqref{prob:SIP} is denoted by $f^*$.
\end{assumption}

We define $G\colon V \to (-\infty,+\infty]$ as
\begin{equation}
G(x) \coloneqq \max_{\bm{\delta}\in \Delta}g(x;\bm{\delta}). \label{eq:G}
\end{equation}
Then we can recast the semi-infinite constraint in \eqref{prob:SIP}, requiring $g(x;\bm{\delta}) \le 0$ for all $\bm{\delta} \in \Delta$, as a single nonsmooth functional constraint $G(x) \le 0$.

\begin{remark}\label{rem:asm}
The latter claim in \eqref{asm:f} of Assumption~\ref{asm:SIP}, i.e., the boundedness of the subdifferential of $f$ at every $x\in S$, holds if $f$ is subdifferentiable and $L_f$-Lipschitz continuous on an open subset of $V$ that contains $S$.
Similarly, the boundedness of the subdifferential of $g(\cdot;\bm{\delta})$ at every $x\in S$ stated in \eqref{asm:g_y} holds if $g(\cdot;\bm{\delta})$ is subdifferentiable and $L_g$-Lipschitz continuous on an open subset of $V$ that contains $S$.
From the assumptions in \eqref{asm:g_y} and \eqref{asm:g_delta}, we see that $G$ is a closed convex function with $S \subseteq \dom G$.
Since $S$ is a closed convex set and since $f$ and $G$ are closed convex functions, $S^*$ is a closed convex set.
\end{remark}

\begin{algorithm}[t]
\caption{An inexact subgradient algorithm for convex semi-infinite programming problems}
\label{alg:proposed}
\begin{description}
\item[\bf Input:] Initial point $x_1 \in S$ and error tolerance $\epsilon > 0$
\item[\bf For $k = 1,2,3,\dots$:] Find $\bm{\delta}_k \in \Delta$ that approximately attains $G(x_k)$, i.e., with $g(x_k;\bm{\delta}_k) \simeq G(x_k)$.
\begin{enumerate}[(i)]
\item If $g(x_k;\bm{\delta}_k) \le \epsilon$, take $d_k \in \partial f(x_k)$ and let
\begin{equation*}
x_{k+1} \coloneqq P_S\left(x_k - \frac{\epsilon}{\lVert d_k\rVert^2}d_k\right).
\end{equation*}\label{enum:update_f}
\item Otherwise, namely if $g(x_k;\bm{\delta}_k) > \epsilon$, take $d_k \in \partial_x g(x_k;\bm{\delta}_k)$ and let
\begin{equation*}
x_{k+1} \coloneqq P_S\left(x_k - \frac{g(x_k;\bm{\delta}_k)}{\lVert d_k\rVert^2}d_k\right).
\end{equation*}\label{enum:update_g}
\end{enumerate}
\end{description}
\end{algorithm}

We present the proposed algorithm in Algorithm~\ref{alg:proposed}.
In principle, the per-iteration subproblem is to compute $G(x_k)$, the functional-constraint violation.
If the violation exceeds $\epsilon$, we update $x_k$ to reduce it; otherwise, we update $x_k$ to improve the objective.
However, computing $G(x_k)$ is generally nonconvex, so we seek $\bm{\delta}_k \in \Delta$ that makes $g(x_k;\bm{\delta}_k)$ approximate $G(x_k)$ well.
For example, we can use the sampling approach proposed by Wei, Haskell, and Zhao~\cite[Section~4]{WHZ2020} to approximate $G(x_k)$.
In Section~\ref{sec:COPP}, we discuss how to compute $G(x_k)$ exactly and inexactly when applying Algorithm~\ref{alg:proposed} to copositive programming problems.

We stop Algorithm~\ref{alg:proposed} after a finite number of iterations in practice.
If we stop after the $N$th iteration, we define
\begin{equation}
I_N \coloneqq \{k\in [N] \mid g(x_k;\bm{\delta}_k) \le \epsilon\}, \label{eq:def_IN}
\end{equation}
and for a minimizer $k^*$ of $f(x_k)$ over $k \in I_N$, we output $x_{k^*}$ as the approximate solution of \eqref{prob:SIP}.
Lemma~\ref{lem:convergence_SIP} provides an iteration-complexity bound ensuring that the approximate solution is well defined (i.e., the set $I_N$ is non-empty) and that it attains a prescribed accuracy.
For ease of description, we define
\begin{equation*}
\xi_k \coloneqq G(x_k) - g(x_k;\bm{\delta}_k), 
\end{equation*}
which is nonnegative by the definition of $G$, for each $k$.

\begin{lemma}\label{lem:convergence_SIP}
Suppose that Assumption~\ref{asm:SIP} holds.
Let $\epsilon > 0$ and $N$ be a positive integer such that
\begin{equation}
N \ge \frac{L^2\dist(x_1,S^*)^2}{\epsilon^2}. \label{eq:N_lb}
\end{equation}
Then the set $I_N$ defined in \eqref{eq:def_IN} is non-empty, and for a minimizer $k^*$ of $f(x_k)$ over $k \in I_N$, the following two inequalities hold:
\begin{align*}
f(x_{k^*}) &\le f^* + \epsilon,\\
G(x_{k^*}) &\le \epsilon + \xi_{k^*}.
\end{align*}

\end{lemma}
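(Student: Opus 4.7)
The plan is to set $x^* \coloneqq P_{S^*}(x_1)$, which is well defined by Remark~\ref{rem:asm}, so that $\|x_1 - x^*\| = \dist(x_1, S^*)$ and $x^* \in S^* \subseteq S$, and to analyse $\|x_k - x^*\|^2$ by a Polyak-style subgradient argument. Since the projection $P_S$ is nonexpansive and $x^* \in S$, expanding the square yields for every $k$
\begin{equation*}
\|x_{k+1} - x^*\|^2 \le \|x_k - x^*\|^2 - 2 t_k \langle d_k, x_k - x^*\rangle + t_k^2 \|d_k\|^2,
\end{equation*}
where $t_k$ denotes the step size prescribed by Algorithm~\ref{alg:proposed} at iteration $k$.

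In case (ii), i.e., when $g(x_k;\bm{\delta}_k) > \epsilon$, the subgradient inequality for $g(\cdot;\bm{\delta}_k)$ at $x^*$, combined with the feasibility bound $g(x^*;\bm{\delta}_k) \le 0$, gives $\langle d_k, x_k - x^*\rangle \ge g(x_k;\bm{\delta}_k)$; after substitution the cross-term cancels with half of the squared-step term, leaving
\begin{equation*}
\|x_{k+1} - x^*\|^2 \le \|x_k - x^*\|^2 - g(x_k;\bm{\delta}_k)^2 / \|d_k\|^2 < \|x_k - x^*\|^2 - \epsilon^2 / L^2,
\end{equation*}
where the strict inequality uses $g(x_k;\bm{\delta}_k) > \epsilon$ and $\|d_k\| \le L_g \le L$. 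In case (i), i.e., when $g(x_k;\bm{\delta}_k) \le \epsilon$, the subgradient inequality for $f$ at $x^*$ together with $f(x^*) = f^*$ yields
\begin{equation*}
\|x_{k+1} - x^*\|^2 \le \|x_k - x^*\|^2 - \frac{\epsilon}{\|d_k\|^2}\bigl(2(f(x_k) - f^*) - \epsilon\bigr),
\end{equation*}
which is strictly less than $\|x_k - x^*\|^2 - \epsilon^2/L^2$ whenever $f(x_k) > f^* + \epsilon$ (using $\|d_k\| \le L_f \le L$).

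I would then close with a single contradiction argument. Suppose either $I_N = \emptyset$, or $I_N \ne \emptyset$ but $f(x_k) > f^* + \epsilon$ for every $k \in I_N$. Each iteration $k \in [N]$ then falls into one of the two displays above and strictly decreases $\|x_k - x^*\|^2$ by more than $\epsilon^2/L^2$, so telescoping gives
\begin{equation*}
0 \le \|x_{N+1} - x^*\|^2 < \dist(x_1, S^*)^2 - N \epsilon^2 / L^2,
\end{equation*}
contradicting \eqref{eq:N_lb}. Hence $I_N$ is non-empty and contains some $k$ with $f(x_k) \le f^* + \epsilon$, so the minimizer $k^*$ of $f$ over $I_N$ satisfies $f(x_{k^*}) \le f^* + \epsilon$; the bound $G(x_{k^*}) \le \epsilon + \xi_{k^*}$ then follows immediately from the definition \eqref{eq:def_xi} of $\xi_{k^*}$ together with $g(x_{k^*};\bm{\delta}_{k^*}) \le \epsilon$, which holds because $k^* \in I_N$. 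The main technical care lies in keeping the inequalities strict throughout: the strict bound $g(x_k;\bm{\delta}_k) > \epsilon$ enforced by case (ii), and the strict form $f(x_k) > f^* + \epsilon$ of the contradiction hypothesis in case (i), are precisely what let the telescoped estimate contradict \eqref{eq:N_lb} even when it holds with equality.
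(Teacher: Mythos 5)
Your proposal is correct and follows essentially the same route as the paper's proof: a Polyak-style analysis of $\lVert x_k - P_{S^*}(x_1)\rVert^2$, a per-iteration decrease of more than $\epsilon^2/L^2$ in both cases under the contradiction hypothesis, telescoping against \eqref{eq:N_lb}, and the final bound on $G(x_{k^*})$ from the definition of $\xi_{k^*}$. The only cosmetic differences are that you state the generic recursion once before specializing the step size, and that you make explicit the case $I_N = \emptyset$, which the paper handles vacuously.
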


\begin{proof}
The set $S^*$ of optimal solutions is closed and convex (see Remark~\ref{rem:asm}).
For convenience, let $x^* \coloneqq P_{S^*}(x_1)$ and $r_k \coloneqq \lVert x_k - x^*\rVert$ for $k = 1,\dots,N+1$.

To derive a contradiction, we assume that $f(x_k) > f^* + \epsilon$ for all $k \in I_N$.
For every $k \in I_N$, we have
\begin{align*}
r_{k+1}^2 &\overset{\scriptsize \text{(a)}}= \left\lVert P_S\left(x_k - \frac{\epsilon}{\lVert d_k\rVert^2}d_k\right) - x^* \right\rVert^2\\
&\overset{\scriptsize \text{(b)}}\le \left\lVert x_k - \frac{\epsilon}{\lVert d_k\rVert^2}d_k - x^*\right\rVert^2\\
&= r_k^2 + \frac{2\epsilon}{\lVert d_k\rVert^2}\langle d_k,x^* - x_k\rangle + \frac{\epsilon^2}{\lVert d_k\rVert^2}\\
&\overset{\scriptsize \text{(c)}}\le r_k^2 + \frac{2\epsilon}{\lVert d_k\rVert^2}(f^* - f(x_k)) + \frac{\epsilon^2}{\lVert d_k\rVert^2}\\
&\overset{\scriptsize \text{(d)}}< r_k^2 - \frac{\epsilon^2}{\lVert d_k\rVert^2}\\
&\overset{\scriptsize \text{(e)}}\le r_k^2 - \frac{\epsilon^2}{L^2},
\end{align*}
where (a) follows from the update rule in \eqref{enum:update_f} of Algorithm~\ref{alg:proposed}, (b) holds by $x^* \in S$ and the nonexpansiveness of the projection $P_S$, (c) follows from $f^* = f(x^*)$ and $d_k \in \partial f(x_k)$, (d) is a consequence of the assumption for contradiction, and (e) follows from $\lVert d_k\rVert \le L$.
In addition, for every $k \in [N] \setminus I_N$, we have
\begin{align*}
r_{k+1}^2 &\overset{\scriptsize \text{(a)}}=  \left\lVert P_S\left(x_k - \frac{g(x_k;\bm{\delta}_k)}{\lVert d_k\rVert^2}d_k\right) - x^*\right\rVert^2\\
&\le \left\lVert x_k - \frac{g(x_k;\bm{\delta}_k)}{\lVert d_k\rVert^2}d_k - x^*\right\rVert^2\\
&= r_k^2 + \frac{2g(x_k;\bm{\delta}_k)}{\lVert d_k\rVert^2}\langle d_k,x^* - x_k\rangle + \frac{g(x_k;\bm{\delta}_k)^2}{\lVert d_k\rVert^2}\\
&\overset{\scriptsize \text{(b)}}\le r_k^2 + \frac{2g(x_k;\bm{\delta}_k)}{\lVert d_k\rVert^2}(g(x^*;\bm{\delta}_k) - g(x_k;\bm{\delta}_k)) + \frac{g(x_k;\bm{\delta}_k)^2}{\lVert d_k\rVert^2}\\
&\overset{\scriptsize \text{(c)}}< r_k^2 - \frac{\epsilon^2}{\lVert d_k\rVert^2}\\
&\overset{\scriptsize \text{(d)}}\le r_k^2 - \frac{\epsilon^2}{L^2},
\end{align*}
where (a) follows from the update rule in \eqref{enum:update_g} of Algorithm~\ref{alg:proposed}, (b) follows from $d_k \in \partial_x g(x_k;\bm{\delta}_k)$, (c) holds because $g(x_k;\bm{\delta}_k) > \epsilon$ and $g(x^*;\bm{\delta}_k) \le 0$, and (d) follows from $\lVert d_k\rVert \le L$.
Therefore,
\begin{equation}
r_{k+1}^2 < r_k^2 - \frac{\epsilon^2}{L^2} \label{eq:rk_recursion}
\end{equation}
holds for every $k \in [N]$.
Summing Inequality~\eqref{eq:rk_recursion} over $k$ from $1$ to $N$, we have
\begin{equation}
r_{N+1}^2 < r_1^2 - \frac{\epsilon^2 N}{L^2}. \label{eq:rk_recursion_sum}
\end{equation}
In particular, the right-hand side of \eqref{eq:rk_recursion_sum} is positive since $r_{N+1}^2$ is nonnegative.
On the other hand, since $N$ satisfies \eqref{eq:N_lb}, we have
\begin{equation*}
r_1^2 - \frac{\epsilon^2 N}{L^2} \le r_1^2 - \dist(x_1,S^*)^2 = 0,
\end{equation*}
which is a contradiction.
Hence, there exists $k \in I_N$ such that $f(x_k) \le f^* + \epsilon$.
This implies that the set $I_N$ is non-empty and $f(x_{k^*}) \le f^* + \epsilon$.

In addition, we have
\begin{equation*}
G(x_{k^*}) = g(x_{k^*};\bm{\delta}_{k^*}) + \xi_{k^*} \le \epsilon + \xi_{k^*},
\end{equation*}
where the equality follows from the definition of $\xi_{k^*}$ and the inequality follows from $k^* \in I_N$.
This completes the proof.
\end{proof}

Typically, we introduce a nonnegative parameter $\alpha$ and find $\bm{\delta}_k \in \Delta$ such that the inequality
\begin{equation}
\xi_k \coloneqq G(x_k) - g(x_k;\bm{\delta}_k) \le \alpha\epsilon \label{eq:xi_ineq_alphaeps}
\end{equation}
holds at each iteration.
If we know a way to evaluate the exact value of $G(x)$, we can set $\alpha = 0$.
Setting $\alpha$ to a positive value allows us to compute $G(x)$ approximately.
Since $\xi_{k^*}$ is also bounded by $\alpha \epsilon$, Lemma~\ref{lem:convergence_SIP} directly implies the following theorem.

\begin{theorem}\label{thm:convergence_SIP}
Suppose that Assumption~\ref{asm:SIP} holds.
Let $\epsilon > 0$, $\alpha \ge 0$, and $N$ be a positive integer satisfying \eqref{eq:N_lb}. 
For each iteration indexed by $k$, we find $\bm{\delta}_k \in \Delta$ satisfying \eqref{eq:xi_ineq_alphaeps}.
Then the set $I_N$ defined in \eqref{eq:def_IN} is non-empty, and for a minimizer $k^*$ of $f(x_k)$ over $k \in I_N$, we have the following two inequalities:
\begin{align*}
f(x_{k^*}) &\le f^* + \epsilon,\\
G(x_{k^*}) &\le (1+\alpha)\epsilon.
\end{align*}
\end{theorem}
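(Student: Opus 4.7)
The plan is to derive Theorem~\ref{thm:convergence_SIP} directly from Lemma~\ref{lem:convergence_SIP}, since the setup of the theorem is obtained from that of the lemma by imposing the extra uniform bound \eqref{eq:xi_ineq_alphaeps} on the oracle error $\xi_k$. The hypotheses on Assumption~\ref{asm:SIP}, on $\epsilon$, and on $N$ are identical in the two statements, so the lemma applies verbatim. It already yields both the non-emptiness of $I_N$ (and hence the existence of the minimizer $k^*$) and the objective bound $f(x_{k^*}) \le f^* + \epsilon$, so those two conclusions require no further argument.

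The only remaining point is to translate the lemma's constraint bound $G(x_{k^*}) \le \epsilon + \xi_{k^*}$ into the theorem's bound $(1+\alpha)\epsilon$. This is immediate from \eqref{eq:xi_ineq_alphaeps}: the assumption $\xi_k \le \alpha\epsilon$ is posited for every iteration and in particular for $k = k^*$, so chaining the two inequalities gives
\begin{equation*}
G(x_{k^*}) \le \epsilon + \xi_{k^*} \le \epsilon + \alpha\epsilon = (1+\alpha)\epsilon.
\end{equation*}

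There is no substantive obstacle; the theorem is essentially a repackaging of Lemma~\ref{lem:convergence_SIP} under an explicit, uniform quantification of the subproblem accuracy. The analytical work, namely the case split between the objective-improving update \eqref{enum:update_f} and the constraint-reducing update \eqref{enum:update_g} and the resulting recursion $r_{k+1}^2 < r_k^2 - \epsilon^2/L^2$ that contradicts $r_{N+1}^2 \ge 0$ once $N$ meets \eqref{eq:N_lb}, has already been carried out inside the proof of the lemma and need not be repeated.
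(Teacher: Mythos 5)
Your proposal is correct and follows exactly the paper's own route: the paper likewise derives the theorem as an immediate consequence of Lemma~\ref{lem:convergence_SIP}, substituting the bound $\xi_{k^*} \le \alpha\epsilon$ from \eqref{eq:xi_ineq_alphaeps} into $G(x_{k^*}) \le \epsilon + \xi_{k^*}$ to obtain $(1+\alpha)\epsilon$. Nothing is missing.
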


In the case where $\alpha = 0$, i.e., where we solve subproblems exactly, we can view Algorithm~\ref{alg:proposed} as the application of the subgradient algorithm proposed by Nesterov~\cite[Equation~(3.2.24)]{Nesterov2018} to Problem~\eqref{prob:SIP}.
The algorithm in \cite{Nesterov2018} is comparable to the algorithms proposed in \cite[Equation~(3.2.13)]{Nesterov2004} and \cite{BBG+2010}, both applicable to convex programming problems with a single nonsmooth functional constraint.
Based on the algorithm~\cite{BBG+2010}, Wei, Haskell, and Zhao~\cite{WHZ2020} proposed an algorithm for solving convex semi-infinite programming problems.
Similar to Algorithm~\ref{alg:proposed}, this algorithm allows one to solve the subproblem inexactly at each iteration.
When applied to Problem~\eqref{prob:SIP}, the four algorithms differ in which function ($f$ or $G$) they improve at each iteration, in the choice of the step size, and in how the output is computed.

Except for Algorithm~\ref{alg:proposed} and Nesterov's algorithm~\cite{Nesterov2018}, the other algorithms require the compactness of $S$ and use the value $D \coloneqq \max_{x,y\in S}\norm{x-y} < +\infty$, the diameter of $S$, in the step size.
On the other hand, as shown in Theorem~\ref{thm:convergence_SIP}, Algorithm~\ref{alg:proposed} and the algorithm proposed by Nesterov~\cite{Nesterov2018} do not assume the compactness of $S$ but instead use $\dist(x_1,S^*)$, the distance between the initial point and the set of optimal solutions, to derive the number of iterations required to obtain a solution within a prescribed accuracy.
Note that $\dist(x_1,S^*) \le D$ holds when $S$ is a compact set of diameter $D$.

\begin{table}[tbp]
\centering
\caption{This table summarizes the number of iterations required to obtain $\bar{x} \in S$ such that $f(\bar{x}) \le f^* + \epsilon$ and $G(\bar{x}) \le \epsilon$ when we solve Problem~\eqref{prob:SIP} using each algorithm.
Here, $S$ is assumed to be a compact set of diameter $D$.
Bregman distances are utilized in the algorithms of \cite{BBG+2010} and \cite{WHZ2020}, but we show the number of iterations when the Bregman distance is Euclidean.
For the algorithm in \cite{WHZ2020}, the inequality $G(\bar{x}) \le \epsilon$ holds in expectation.
In addition, the algorithm in \cite{WHZ2020} yields $\bar{x} \in S$ such that $f(\bar{x}) \le f^* + \frac{3}{7}\epsilon$, while Algorithm~\ref{alg:proposed} yields $\bar{x} \in S$ such that $f(\bar{x}) \le f^* + \frac{1}{1 + \alpha}\epsilon$.
The number of iterations in the algorithm of \cite{BBG+2010} is obtained by taking the limit as $\beta \to 1$, where $\beta < 1$ is a parameter appearing in \cite[Corollary~2.1]{BBG+2010}.}
\begin{tabular}{lll}
\hline
Exactness & Method                                             & \#Iter.                                                                                               \\
\hline
Exact     & \cite{Nesterov2004}               & $\displaystyle \frac{3}{2} + 3\frac{D^2L^2}{\epsilon^2}$ \\
          & \cite{BBG+2010}                   & $\displaystyle \frac{(1 + \log 2)^2}{2(2-\sqrt{2})^2} \frac{D^2L^2}{\epsilon^2}   \simeq  4.2\frac{D^2L^2}{\epsilon^2}$                                                                \\
          & \cite{Nesterov2018}               & $\displaystyle \frac{D^2L^2}{\epsilon^2}$                                                                               \\
Inexact   & \cite{WHZ2020}                    & $\displaystyle 98\frac{D^2L^2}{\epsilon^2}$                                                                            \\
          & Algorithm~\ref{alg:proposed} & $\displaystyle (1 + \alpha)^2\frac{D^2L^2}{\epsilon^2}$      \\
\hline
\end{tabular}
\label{tab:compare_num_iter}
\end{table}

Under the compactness of $S$, Table~\ref{tab:compare_num_iter} summarizes the number of iterations required to obtain $\bar{x} \in S$ such that $f(\bar{x}) \le f^* + \epsilon$ and $G(\bar{x}) \le \epsilon$ when solving \eqref{prob:SIP} with each algorithm.
If we set the violation measure $\alpha$ to a reasonable value, e.g., $\alpha = 1$ as in the numerical experiments conducted in Section~\ref{sec:experiment}, Algorithm~\ref{alg:proposed} requires fewer iterations than the inexact algorithm of Wei, Haskell, and Zhao~\cite{WHZ2020} and even the exact algorithm of Beck et al.~\cite{BBG+2010}.

Note that the algorithm proposed by Nesterov~\cite{Nesterov2004} requires neither the error tolerance $\epsilon$ nor the number $N$ of iterations in advance and admits an error bound with respect to the current iteration index $k$.
However, the other algorithms use the error tolerance or the number of iterations when running.

\section{Application to copositive programming problems}\label{sec:COPP}
Copositive programming is a special case of convex semi-infinite programming; thus we can apply Algorithm~\ref{alg:proposed} to copositive programming problems.
In Section~\ref{subsec:general_COPP}, we discuss how to implement the proposed algorithm to solve general copositive programming problems.
In Section~\ref{subsec:test_CP}, we apply the proposed algorithm to the problem of testing whether a matrix is completely positive, which can be formulated as a copositive programming problem with a ball constraint.
Using Theorem~\ref{thm:convergence_SIP}, we derive a sufficient condition for a matrix not to be completely positive.
In Section~\ref{subsec:symcone}, we discuss a potential approach to solving copositive programming problems involving copositive cones over symmetric cones.

\subsection{General copositive programming problems}\label{subsec:general_COPP}
The copositive programming problem considered here is of the form
\begin{equation}
\begin{alignedat}{3}
&\minimize_{\bm{x}} && \quad \bm{c}^\top\bm{x}\\
&\subjectto && \quad \bm{A}_0 + \sum_{i=1}^m x_i\bm{A}_i \in \COP^n,\\
&&& \quad \bm{x}\in S,
\end{alignedat}\label{prob:COPP}
\end{equation}
where $\bm{A}_0,\dots,\bm{A}_m\in \calS^n$, $\bm{c}\in \bbR^m$, and $S$ is a non-empty closed convex subset of $\bbR^m$.
Since $\bm{A} \in \calS^n$ belongs to $\COP^n$ if and only if $\bm{\delta}^\top\bm{A}\bm{\delta} \ge 0$ for all $\bm{\delta} \in \Delta^{n-1}$, we can reformulate \eqref{prob:COPP} in the form of \eqref{prob:SIP}.
Specifically, the optimal value of \eqref{prob:COPP} is equal to that of the following convex semi-infinite programming problem:
\begin{equation}
\begin{alignedat}{3}
&\minimize_{\bm{x}} && \quad f(\bm{x}) \coloneqq \bm{c}^\top\bm{x}\\
&\subjectto && \quad g(\bm{x};\bm{\delta}) \coloneqq \bm{\delta}^\top\left(-\bm{A}_0 - \sum_{i=1}^m x_i\bm{A}_i\right)\bm{\delta} \le 0\ \text{for all $\bm{\delta} \in \Delta^{n-1}$},\\
&&& \quad \bm{x} \in S.
\end{alignedat}\label{prob:COPP_SIP}
\end{equation}

To implement Algorithm~\ref{alg:proposed} in practice, we need to deal with the following issues.
The first issue is to compute the value $L = \max\{L_f,L_g\}$.
The second issue is to find $\bm{\delta} \in \Delta^{n-1}$ such that $G(\bm{x}) - g(\bm{x};\bm{\delta}) \le \alpha \epsilon$ for a given $\bm{x}\in S$, where $G(\bm{x})$ is defined analogously to \eqref{eq:G}.

First, we discuss how to estimate the value $L$.
The functions $f$ and $g$ are differentiable with respect to $\bm{x}$, so we have
\begin{align}
\partial f(\bm{x}) &= \{\bm{c}\}, \label{eq:partial_f}\\
\partial_{\bm{x}} g(\bm{x};\bm{\delta}) &= \{(-\bm{\delta}^\top\bm{A}_1\bm{\delta},\dots,-\bm{\delta}^\top\bm{A}_m\bm{\delta})^\top\} \label{eq:partial_g}
\end{align}
for each $\bm{x} \in \bbR^m$ and $\bm{\delta} \in \Delta^{n-1}$.
From \eqref{eq:partial_f}, we can set $L_f$ to $\lVert\bm{c}\rVert_2$.
In addition, from \eqref{eq:partial_g}, we have
\begin{align}
\lVert (-\bm{\delta}^\top\bm{A}_1\bm{\delta},\dots,-\bm{\delta}^\top\bm{A}_m\bm{\delta})^\top\rVert_2 &= \sqrt{\sum_{i=1}^m\left(\sum_{k,l=1}^n(\bm{A}_i)_{kl}\delta_k\delta_l\right)^2} \nonumber\\
&\le \sqrt{\sum_{i=1}^m\left(\max_{1\le k\le l\le n}\abs{(\bm{A}_i)_{kl}}\right)^2}, \label{eq:Lg}
\end{align}
where the last inequality follows from $\bm{\delta}\in \Delta^{n-1}$.
The right-hand side of \eqref{eq:Lg} does not depend on $\bm{\delta} \in \Delta^{n-1}$, so it can be set to $L_g$.
Thus, we can set
\begin{equation}
L = \max\left\{\lVert\bm{c}\rVert_2, \sqrt{\sum_{i=1}^m\left(\max_{1\le k\le l\le n} \lvert(\bm{A}_i)_{kl}\rvert\right)^2}\right\}. \label{eq:est_L}
\end{equation}

Next, we discuss how to find $\bm{\delta} \in \Delta^{n-1}$ such that $G(\bm{x}) - g(\bm{x};\bm{\delta}) \le \alpha \epsilon$.
We note that $G(\bm{x})$ is the negative of the optimal value of the standard quadratic programming problem
\begin{equation}
\gamma(\bm{Q}) \coloneqq \min_{\bm{\delta} \in \Delta^{n-1}} \bm{\delta}^\top \bm{Q}\bm{\delta} \label{eq:stQP}
\end{equation}
with the coefficient matrix $\bm{Q} \in \calS^n$ given by $\bm{A}_0 + \sum_{i=1}^m x_i\bm{A}_i$.
In the following subsubsections, we give an overview of several approaches to solving a standard quadratic programming problem of the form \eqref{eq:stQP}, both exactly (the case where the violation parameter $\alpha$ equals $0$) and inexactly (the case where $\alpha > 0$).
An exact method is presented in Section~\ref{subsec:MILP}.
Inexact methods are presented in Sections~\ref{subsec:Grid}, \ref{subsec:SimpUnif}, and \ref{subsec:GridUnif}.
Whereas the exact method requires external solvers to solve a mixed-integer linear programming problem, only a finite number of function evaluations are performed in the inexact methods.
In Sections~\ref{subsec:Grid}, \ref{subsec:SimpUnif}, and \ref{subsec:GridUnif}, we only consider the case where $\alpha = 1$.
This is without loss of generality because for general $\alpha > 0$, we regard $\alpha\epsilon$ as a new $\epsilon$.

We briefly summarize the three inexact methods introduced in the subsequent subsubsections.
The method presented in Section~\ref{subsec:Grid} is deterministic and was originally proposed as a polynomial-time approximation scheme by Bomze and de Klerk~\cite{Bd2002}.
The method in Section~\ref{subsec:SimpUnif} is a brute-force randomized approach.
As shown in Section~\ref{subsec:NE_stQP}, this method has limited practical value; however, we include it to motivate the method in Section~\ref{subsec:GridUnif}.
The method in Section~\ref{subsec:GridUnif} is a randomized approach that combines the ideas of Sections~\ref{subsec:Grid} and \ref{subsec:SimpUnif}.
When $n$ is large, this method is expected to require fewer function evaluations to approximate $\gamma(\bm{Q})$ than the method in Section~\ref{subsec:Grid}.

In the following methods, we frequently use a lower bound for $\gamma(\bm{Q})$.
The choice of the lower bound is arbitrary, and there are many alternatives that are summarized in the literature~\cite{BLT2008}.
In our study, because we need to calculate the lower bound per iteration, it might be preferable to adopt a computationally inexpensive one.
As such a bound, under the convention that $1/0 = \infty$, $a + \infty = \infty$ for any $a\in \bbR \cup \{\infty\}$, and $1/\infty = 0$, Bomze, Locatelli, and Tardella~\cite{BLT2008} provided the following lower bound for $\gamma(\bm{Q})$:
\begin{equation*}
\underline{\gamma}(\bm{Q}) \coloneqq \min_{1\le i\le j\le n}Q_{ij} + \frac{1}{\displaystyle \sum_{k=1}^n\frac{1}{Q_{kk} - {\displaystyle \min_{1\le i\le j\le n}Q_{ij}}}}.
\end{equation*}
They showed that this bound outperforms other simple closed-form lower bounds reported in \cite[Section~2]{BLT2008}.

Moreover, we also use a Lipschitz constant of the function $\bm{\delta}^\top\bm{Q}\bm{\delta}$ on $\Delta^{n-1}$.
An estimate of the constant is as follows:
\begin{equation}
K(\bm{Q}) \coloneqq \max_{\bm{\delta} \in \Delta^{n-1}}\norm{\nabla (\bm{\delta}^\top\bm{Q}\bm{\delta})}_2 = 2\max_{\bm{\delta} \in \Delta^{n-1}}\norm{\bm{Q}\bm{\delta}}_2. \label{eq:K(Q)}
\end{equation}
Since $\norm{\bm{Q}\bm{\delta}}_2$ is convex and $\Delta^{n-1}$ is compact, the maximization problem in \eqref{eq:K(Q)} attains its maximum at an extreme point of $\Delta^{n-1}$, i.e., at one of $\bm{e}_1,\dots,\bm{e}_n$.
Letting $\bm{q}_i$ be the $i$th column of the matrix $\bm{Q}$ for each $i \in [n]$, it follows from \eqref{eq:K(Q)} that
\begin{equation*}
K(\bm{Q}) = 2\max_{1\le i\le n}\norm{\bm{q}_i}_2.
\end{equation*}
If $\bm{Q}=\bm{O}$, then the optimal value $\gamma(\bm{O})$ of \eqref{eq:stQP} is $0$, and the set of optimal solutions is $\Delta^{n-1}$.
Therefore, we may assume that $\bm{Q}\neq \bm{O}$.
Under this assumption, it follows that $K(\bm{Q})>0$.

\subsubsection{Exact method through mixed-integer linear programming}\label{subsec:MILP}
To solve the standard quadratic programming problem in \eqref{eq:stQP} exactly, we can utilize the mixed-integer linear programming reformulations proposed by Gondzio and Y{\i}ld{\i}r{\i}m~\cite{GY2021}.
In their paper, various mixed-integer linear programming reformulations of standard quadratic programming problems are presented.
Among them, we adopt the following reformulation based on \cite[Proposition~2]{GY2021}:\footnote{Using their terminology, we adopt formulation (MILP2) with the lower bound $\ell_1(\bm{Q})$.
They claim that this reformulation is suitable for large-scale instances because of its robust practical performance~\cite[page~319]{GY2021}.}
\begin{equation}
\begin{alignedat}{3}
&\minimize_{\bm{\delta},\bm{y},\bm{z},v} && \quad v\\
&\subjectto && \quad \bm{e}_j^\top\bm{Q}\bm{\delta} \le v + z_j \text{ for all $j \in [n]$},\\
&&& \quad \sum_{i=1}^n \delta_i = 1,\\
&&& \quad \delta_j \le y_j\text{ for all $j \in [n]$},\\
&&& \quad z_j \le \left(\max_{1\le i \le n}Q_{ij} - \underline{\gamma}(\bm{Q})\right)(1-y_j)\text{ for all $j \in [n]$}, \\
&&& \quad \bm{\delta} \in \bbR_+^n,\\
&&& \quad \bm{z} \in \bbR_+^n,\\
&&& \quad y_j \in \{0,1\}\text{ for all $j \in [n]$}.
\end{alignedat}\label{prob:stQP_MILP}
\end{equation}
For an optimal solution $(\bm{\delta}^*,\bm{y}^*,\bm{z}^*,v^*)$ of \eqref{prob:stQP_MILP}, the vector $\bm{\delta}^*$ represents an optimal solution of \eqref{eq:stQP} and $v^*$ represents its optimal value $\gamma(\bm{Q})$.

Note that finding an optimal solution of the standard quadratic programming problem in \eqref{eq:stQP} is more demanding than merely testing copositivity of the matrix $\bm{Q}$.
For the latter task, Anstreicher~\cite{Anstreicher2021} proposed a one-shot mixed-integer linear programming formulation for testing copositivity of a symmetric matrix.
This is a natural choice when the main goal is to decide whether a specific matrix is copositive.
We also note that copositivity can alternatively be tested through a mixed-integer linear programming reformulation equivalent to a standard quadratic programming problem, as presented in this subsubsection.
For a comparison between the two types of formulations for testing copositivity, see \cite{Peng2022}.
In contrast, our focus is on solving a general copositive programming problem in \eqref{prob:COPP} through an inexact subgradient framework.
In our approach, a mixed-integer linear programming formulation is used only as an exact option for solving a standard quadratic programming problem arising at each iteration.

To illustrate this distinction, consider the feasibility problem of finding $\bm{x}\in\bbR^m$ such that an affine linear mapping $\bm{A}(\bm{x}) \in \calS^n$ is copositive.
By applying the proposed framework, we can find a solution to this feasibility problem directly.
In contrast, to solve the same feasibility problem using the mixed-integer linear programming formulation of Anstreicher~\cite{Anstreicher2021}, one would need to embed the copositivity test into an outer procedure such as the ellipsoid method~\cite{GLS1993}.

\subsubsection{Inexact deterministic method through a regular grid of the standard simplex}\label{subsec:Grid}
The first inexact method for solving the standard quadratic programming problem is to discretize the standard simplex $\Delta^{n-1}$ by the regular grid defined as
\begin{equation}
\Delta_r^{n-1} \coloneqq \{\bm{\delta} \in \Delta^{n-1} \mid \text{Every element of $r\bm{\delta}$ is a nonnegative integer}\} \label{eq:regular_grid}
\end{equation}
for a positive integer $r$.
From \cite[Theorem~3.2]{Bd2002} (see also \cite[Theorem~2]{Nesterov2003}), we have
\begin{equation}
\min_{\bm{\delta} \in \Delta_r^{n-1}}\bm{\delta}^\top \bm{Q}\bm{\delta} - \gamma(\bm{Q}) \le \frac{1}{r}\left(\max_{1\le i\le n}Q_{ii} - \underline{\gamma}(\bm{Q})\right). \label{eq:Bd2002_ub}
\end{equation}
In other words, for a given $\epsilon > 0$, if $r$ satisfies
\begin{equation}
r \ge  \frac{1}{\epsilon}\left(\max_{1\le i\le n}Q_{ii} - \underline{\gamma}(\bm{Q})\right), \label{eq:Grid_r}
\end{equation}
it follows from \eqref{eq:Bd2002_ub} that
\begin{equation}
\min_{\bm{\delta} \in \Delta_r^{n-1}}\bm{\delta}^\top \bm{Q}\bm{\delta} - \gamma(\bm{Q}) \le \epsilon. \label{eq:Bd2002_ub_eps}
\end{equation}

Since $\Delta_r^{n-1}$ is a finite set regardless of $r$, only a finite number of function evaluations is required to solve $\min_{\bm{\delta} \in \Delta_r^{n-1}}\bm{\delta}^\top \bm{Q}\bm{\delta}$.
The number of evaluations is bounded by
\begin{equation*}
\lvert\Delta_r^{n-1}\rvert = \binom{n+r-1}{r} \le n^r,
\end{equation*}
which is polynomial in $n$ if we regard $r$ as a constant.
If the lower bound $\underline{\gamma}(\bm{Q})$ is weak, the number $\lvert \Delta_r^{n-1} \rvert$ of evaluations can become prohibitively large.
In such cases, rather than using the closed-form bound, it may be preferable to use the bounds obtained by solving semidefinite programming problems as presented in \cite{BLT2008}.

\subsubsection{Inexact randomized method through uniform sampling from the standard simplex}\label{subsec:SimpUnif}
The second inexact method for solving the standard quadratic programming problem is to discretize the standard simplex $\Delta^{n-1}$ by uniformly sampling from it.
Let $\bm{\delta}_1,\dots,\bm{\delta}_M$ be a collection of independent and identically distributed random vectors uniformly distributed on $\Delta^{n-1}$.
In Proposition~\ref{prop:Unif_829} shown below, we provide a lower bound for the number $M$ of samples sufficient to ensure that
\begin{equation}
\min_{1\le i\le M}\bm{\delta}_i^\top \bm{Q}\bm{\delta}_i - \gamma(\bm{Q}) \le \epsilon \label{eq:SimpUnif_approxsol}
\end{equation}
holds with probability at least $1-\phi$ for a given $\phi\in (0,1)$.
Throughout this subsubsection, $P$ denotes the uniform distribution on $\Delta^{n-1}$.

\begin{lemma}\label{lem:prob_ball_cap_simp_lb}
Let $r\in (0,\sqrt{2}]$.
For any $\bm{x}\in \Delta^{n-1}$, we have
\begin{equation*}
P(B(\bm{x},r) \cap \Delta^{n-1}) \ge \left(\frac{r}{\sqrt{2}}\right)^{n-1}.
\end{equation*}
\end{lemma}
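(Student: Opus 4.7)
The plan is to exploit a homothety (scaling) centered at the fixed point $\bm{x}$ that shrinks the simplex $\Delta^{n-1}$ into a smaller copy of itself lying entirely inside $B(\bm{x},r) \cap \Delta^{n-1}$. Concretely, I would introduce the set
\begin{equation*}
T \coloneqq \bm{x} + \frac{r}{\sqrt{2}}\bigl(\Delta^{n-1} - \bm{x}\bigr) = \left\{(1-\tfrac{r}{\sqrt{2}})\bm{x} + \tfrac{r}{\sqrt{2}}\bm{z} \relmiddle| \bm{z}\in \Delta^{n-1}\right\},
\end{equation*}
and show two things about it: (i) $T \subseteq B(\bm{x},r)\cap \Delta^{n-1}$, and (ii) $P(T) = (r/\sqrt{2})^{n-1}$.

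For (i), I would first note that the (Euclidean) diameter of $\Delta^{n-1}$ equals $\sqrt{2}$, attained between any two distinct vertices $\bm{e}_i$ and $\bm{e}_j$. Consequently, for any $\bm{y}\in T$ written as $\bm{y} = \bm{x} + \tfrac{r}{\sqrt{2}}(\bm{z}-\bm{x})$ with $\bm{z}\in \Delta^{n-1}$, we get $\norm{\bm{y}-\bm{x}}_2 = \tfrac{r}{\sqrt{2}}\norm{\bm{z}-\bm{x}}_2 \le \tfrac{r}{\sqrt{2}}\cdot \sqrt{2} = r$, so $\bm{y}\in B(\bm{x},r)$. The hypothesis $r\le \sqrt{2}$ implies $r/\sqrt{2}\in (0,1]$, so $\bm{y}$ is a convex combination of the two simplex points $\bm{x}$ and $\bm{z}$, and convexity of $\Delta^{n-1}$ yields $\bm{y}\in \Delta^{n-1}$. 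Hence $T$ lies in the intersection.

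For (ii), $T$ is the image of $\Delta^{n-1}$ under the affine contraction $\bm{z}\mapsto \bm{x} + \tfrac{r}{\sqrt{2}}(\bm{z}-\bm{x})$, which acts on the affine hull of $\Delta^{n-1}$ (an $(n-1)$-dimensional affine subspace of $\bbR^n$) as a homothety with ratio $r/\sqrt{2}$. Therefore its $(n-1)$-dimensional Hausdorff measure scales by $(r/\sqrt{2})^{n-1}$, and since $P$ is the uniform distribution on $\Delta^{n-1}$, we obtain $P(T) = (r/\sqrt{2})^{n-1}$. Combining with the inclusion gives the desired bound.

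I do not expect a real obstacle here; the only points requiring care are (a) verifying that the diameter of $\Delta^{n-1}$ is exactly $\sqrt{2}$ (a direct computation on vertices together with the fact that the diameter of a polytope is attained at a pair of vertices), and (b) being explicit that the volume scaling uses the intrinsic $(n-1)$-dimensional measure induced on the simplex, so the factor is $(r/\sqrt{2})^{n-1}$ rather than $(r/\sqrt{2})^{n}$.
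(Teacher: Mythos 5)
Your proof is correct, but it takes a genuinely different route from the paper's. The paper first reduces to the worst case $\bm{x}=\bm{e}_1$ by invoking a concavity result for $\bm{x}\mapsto\vol(B(\bm{x},r)\cap\Delta^{n-1})$ (citing a lemma of Jerrum) together with the symmetry of $\Delta^{n-1}$, and then exhibits a small regular simplex with vertices $\bm{e}_1$ and $\bm{e}_i(r)=(1-\tfrac{r}{\sqrt{2}})\bm{e}_1+\tfrac{r}{\sqrt{2}}\bm{e}_i$ inside $B(\bm{e}_1,r)\cap\Delta^{n-1}$, computing its relative volume via the explicit formula for the volume of a regular $(n-1)$-simplex of given edge length. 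You instead skip the reduction entirely: your homothety $T=\bm{x}+\tfrac{r}{\sqrt{2}}(\Delta^{n-1}-\bm{x})$ is a shrunken copy of the whole simplex centered at the given point, the inclusion $T\subseteq B(\bm{x},r)\cap\Delta^{n-1}$ follows from $\diam(\Delta^{n-1})=\sqrt{2}$ and convexity, and $P(T)=(r/\sqrt{2})^{n-1}$ follows from the scaling of $(n-1)$-dimensional measure under a homothety of ratio $r/\sqrt{2}$. (Note that at $\bm{x}=\bm{e}_1$ your $T$ coincides with the paper's $\Delta^{n-1}(r)$, so the two constructions agree at the vertex.) Your argument is more elementary --- it needs neither the concavity lemma nor the volume formula for a regular simplex --- and it generalizes verbatim to any compact convex body $C$ in place of $\Delta^{n-1}$, yielding the bound $(r/\diam C)^{d}$ with $d=\dim C$; what the paper's approach buys is the identification of the exact minimizer of $\bm{x}\mapsto P(B(\bm{x},r)\cap\Delta^{n-1})$, which could in principle be exploited for a sharper constant, though it is not needed for the stated inequality.
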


\begin{proof}
Let $\vol(C)$ denote the $(n-1)$-dimensional Lebesgue measure of an $(n-1)$-dimensional set $C$ in $\bbR^n$.
Then we have
\begin{equation*}
p(\bm{x}) \coloneqq P(B(\bm{x},r) \cap \Delta^{n-1}) = \frac{\vol(B(\bm{x},r) \cap \Delta^{n-1})}{\vol(\Delta^{n-1})}.
\end{equation*}

First, we show that
\begin{equation}
\min_{\bm{x}\in \Delta^{n-1}}p(\bm{x}) = p(\bm{e}_1). \label{eq:min_p}
\end{equation}
Since $p(\bm{x})$ is nonnegative, we have
\begin{equation*}
\argmin_{\bm{x} \in \Delta^{n-1}} p(\bm{x}) = \argmin_{\bm{x} \in \Delta^{n-1}} p(\bm{x})^{n-1} =  \argmin_{\bm{x} \in \Delta^{n-1}} \vol(B(\bm{x},r) \cap \Delta^{n-1})^{n-1}.
\end{equation*}
By \cite[Lemma~6.21]{Jerrum2003}, the function $\vol(B(\bm{x},r) \cap \Delta^{n-1})^{n-1}$ with respect to $\bm{x} \in \Delta^{n-1}$ is concave, so it achieves the minimum at an extreme point of $\Delta^{n-1}$.
By the symmetry of $\Delta^{n-1}$, it does so at $\bm{x} = \bm{e}_1$.

For each $i \in \{2,\dots,n\}$, we define
\begin{equation*}
\bm{e}_i(r) \coloneqq \left(1 - \frac{r}{\sqrt{2}}\right)\bm{e}_1 + \frac{r}{\sqrt{2}}\bm{e}_i.
\end{equation*}
The convex hull of the set $\{\bm{e}_1,\bm{e}_2(r),\dots,\bm{e}_n(r)\}$, denoted by $\Delta^{n-1}(r)$, is an $(n-1)$-dimensional simplex with edge length $r$.
Since $\bm{e}_1,\bm{e}_2(r),\dots,\bm{e}_n(r) \in B(\bm{e}_1,r) \cap \Delta^{n-1}$ and $B(\bm{e}_1,r) \cap \Delta^{n-1}$ is convex, the inclusion
\begin{equation}
\Delta^{n-1}(r) \subseteq B(\bm{e}_1,r) \cap \Delta^{n-1} \label{eq:Delta(r)_subseteq}
\end{equation}
holds.
Therefore,
\begin{align*}
p(\bm{x}) \ge p(\bm{e}_1) = \frac{\vol(B(\bm{e}_1,r) \cap \Delta^{n-1})}{\vol(\Delta^{n-1})} \ge \frac{\vol(\Delta^{n-1}(r))}{\vol(\Delta^{n-1})} = \left(\frac{r}{\sqrt{2}}\right)^{n-1},
\end{align*}
holds for any $\bm{x}\in \Delta^{n-1}$, where the first inequality follows from \eqref{eq:min_p}, the second inequality follows from \eqref{eq:Delta(r)_subseteq}, and the last equality follows from the fact that the volume of an $(n-1)$-dimensional simplex with edge length $l$ is given by
\begin{equation*}
\frac{\sqrt{n}}{(n-1)!}\left(\frac{l}{\sqrt{2}}\right)^{n-1},
\end{equation*}
so we obtain the desired result.
\end{proof}

Below, we present the main result of this subsubsection.
For notational convenience, we define
\begin{equation}
m(\rho,\phi) \coloneqq \frac{\log\phi}{\log(1-\rho)} \label{eq:def_m}
\end{equation}
for $\rho,\phi \in (0,1)$.
\begin{proposition}\label{prop:Unif_829}
Let $\bm{\delta}_1,\dots,\bm{\delta}_M$ be a collection of independent and identically distributed random vectors uniformly distributed on $\Delta^{n-1}$.
In addition, let $m$ denote the function defined in \eqref{eq:def_m}.
For $\epsilon \in (0,\sqrt{2}K(\bm{Q})]$ and $\phi \in (0,1)$, if the number $M$ of samples satisfies
\begin{equation}
M \ge m\left(\left(\frac{\epsilon}{\sqrt{2}K(\bm{Q})}\right)^{n-1},\phi\right), \label{eq:Unif_829_num_samples}
\end{equation}
then, with probability at least $1 - \phi$, we have
\begin{equation*}
\min_{1\le i\le M}\bm{\delta}_i^\top \bm{Q}\bm{\delta}_i - \gamma(\bm{Q}) \le \epsilon.
\end{equation*}
\end{proposition}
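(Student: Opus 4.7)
The plan is to exploit the Lipschitz bound $K(\bm{Q})$ to convert ``closeness to an optimizer'' into ``objective-value closeness'', and then estimate the probability of landing near an optimizer using Lemma~\ref{lem:prob_ball_cap_simp_lb}. Concretely, let $\bm{\delta}^*$ be any minimizer of $\bm{\delta}^\top\bm{Q}\bm{\delta}$ over $\Delta^{n-1}$ (so $(\bm{\delta}^*)^\top\bm{Q}\bm{\delta}^* = \gamma(\bm{Q})$), and set $r \coloneqq \epsilon/K(\bm{Q})$. The hypothesis $\epsilon \le \sqrt{2}K(\bm{Q})$ gives $r \in (0,\sqrt{2}]$, which is exactly what is needed to invoke Lemma~\ref{lem:prob_ball_cap_simp_lb}.

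Next I would observe that the map $\bm{\delta} \mapsto \bm{\delta}^\top\bm{Q}\bm{\delta}$ is $K(\bm{Q})$-Lipschitz on $\Delta^{n-1}$ with respect to $\norm{\cdot}_2$, by the mean-value inequality together with the bound $\max_{\bm{\delta}\in\Delta^{n-1}}\norm{\nabla(\bm{\delta}^\top\bm{Q}\bm{\delta})}_2 = K(\bm{Q})$ recorded in \eqref{eq:K(Q)}. Hence, whenever a sample $\bm{\delta}_i$ satisfies $\bm{\delta}_i \in B(\bm{\delta}^*, r) \cap \Delta^{n-1}$, we have
\begin{equation*}
\bm{\delta}_i^\top\bm{Q}\bm{\delta}_i - \gamma(\bm{Q}) \le K(\bm{Q})\norm{\bm{\delta}_i - \bm{\delta}^*}_2 \le K(\bm{Q})\cdot r = \epsilon,
\end{equation*}
so it suffices to show that, with probability at least $1-\phi$, at least one of $\bm{\delta}_1,\dots,\bm{\delta}_M$ lands in $B(\bm{\delta}^*,r)\cap \Delta^{n-1}$.

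For this, Lemma~\ref{lem:prob_ball_cap_simp_lb} (applied at $\bm{x} = \bm{\delta}^*$ with radius $r$) yields
\begin{equation*}
\rho \coloneqq P\bigl(B(\bm{\delta}^*,r)\cap \Delta^{n-1}\bigr) \ge \left(\frac{r}{\sqrt{2}}\right)^{n-1} = \left(\frac{\epsilon}{\sqrt{2}K(\bm{Q})}\right)^{n-1}.
\end{equation*}
By independence, the probability that every sample misses $B(\bm{\delta}^*,r)\cap \Delta^{n-1}$ is at most $(1-\rho)^M$, which I would further bound using the lower bound on $\rho$ just obtained. Demanding $(1-\rho)^M \le \phi$ and taking logarithms (noting $\log(1-\rho)<0$) rearranges to $M \ge \log\phi/\log(1-\rho) = m(\rho,\phi)$, and monotonicity of $m(\cdot,\phi)$ in its first argument then gives the stated sufficient condition \eqref{eq:Unif_829_num_samples}.

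I do not foresee a major obstacle: the Lipschitz estimate is elementary, Lemma~\ref{lem:prob_ball_cap_simp_lb} supplies the only nontrivial geometric input, and the remainder is standard coupon-collector style reasoning. The one point that requires a brief justification in the write-up is the monotonicity step that converts the lower bound on $\rho$ into an upper bound on $m(\rho,\phi)$, together with the verification that the hypothesis $\epsilon \le \sqrt{2}K(\bm{Q})$ makes the radius $r$ fall within the admissible range $(0,\sqrt{2}]$ of Lemma~\ref{lem:prob_ball_cap_simp_lb}.
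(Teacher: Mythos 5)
Your proposal is correct and follows essentially the same route as the paper's proof: an optimal $\bm{\delta}^*$ is fixed, Lemma~\ref{lem:prob_ball_cap_simp_lb} is applied to the ball $B(\bm{\delta}^*,\epsilon/K(\bm{Q}))$ to lower-bound the hit probability by $(\epsilon/(\sqrt{2}K(\bm{Q})))^{n-1}$, independence gives $(1-\rho)^M\le\phi$ under \eqref{eq:Unif_829_num_samples}, and the Lipschitz bound $K(\bm{Q})$ converts proximity to $\bm{\delta}^*$ into the objective-value bound. The only cosmetic difference is that you spell out the monotonicity of $m(\cdot,\phi)$ explicitly, which the paper leaves implicit.
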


\begin{proof}
Let $\bm{\delta}^*$ be an optimal solution of $\min_{\bm{\delta} \in \Delta^{n-1}} \bm{\delta}^\top\bm{Q}\bm{\delta}$.
Then the probability that $\bm{\delta}_i \not\in B(\bm{\delta}^*,\epsilon/K(\bm{Q}))$ holds for all $i \in [M]$ is $\prod_{i=1}^M P(\bm{\delta}_i \not\in B(\bm{\delta}^*,\epsilon/K(\bm{Q})))$ and it is bounded by
\begin{align*}
\prod_{i=1}^M P\left(\bm{\delta}_i \not\in B\left(\bm{\delta}^*,\frac{\epsilon}{K(\bm{Q})}\right)\right) &= \prod_{i=1}^M \left\{1 - P\left(\bm{\delta}_i \in B\left(\bm{\delta}^*,\frac{\epsilon}{K(\bm{Q})}\right)\right)\right\}\\
&\le \left\{1 - \left(\frac{\epsilon}{\sqrt{2}K(\bm{Q})}\right)^{n-1}\right\}^M\\
&\le \phi,
\end{align*}
where we use Lemma~\ref{lem:prob_ball_cap_simp_lb} to derive the first inequality and the second inequality follows from \eqref{eq:Unif_829_num_samples}.
This implies that with probability at least $1-\phi$, there exists $j \in [M]$ such that $\bm{\delta}_j \in  B(\bm{\delta}^*,\epsilon/K(\bm{Q}))$.
Then we have
\begin{equation*}
\min_{1\le i\le M} \bm{\delta}_i^\top \bm{Q}\bm{\delta}_i - (\bm{\delta}^*)^\top \bm{Q}\bm{\delta}^* \le \bm{\delta}_j^\top \bm{Q}\bm{\delta}_j - (\bm{\delta}^*)^\top \bm{Q}\bm{\delta}^* \le K(\bm{Q})\norm{\bm{\delta}_j - \bm{\delta}^*}_2 \le \epsilon,
\end{equation*}
so we obtain the desired result.
\end{proof}

\begin{remark}
By using \cite[Proposition~4.3]{WHZ2020}, we can obtain a result similar to Proposition~\ref{prop:Unif_829}.
Let $\bar{\gamma}(\bm{Q})$ be such that $\max_{\bm{\delta} \in \Delta^{n-1}}\bm{\delta}^\top\bm{Q}\bm{\delta} \le  \bar{\gamma}(\bm{Q})$ and $\underline{\gamma}(\bm{Q}) < \bar{\gamma}(\bm{Q})$ hold, and let $\bm{\delta}_1,\dots,\bm{\delta}_M$ be a collection of independent and identically distributed random vectors uniformly distributed on $\Delta^{n-1}$.
By applying \cite[Proposition~4.3]{WHZ2020} to the standard quadratic programming problem $\gamma(\bm{Q})$, we see that for a given $\epsilon > 0$, if the number $M$ of samples satisfies
\begin{equation}
M \ge m\left(\left(\frac{\epsilon}{2\sqrt{2}K(\bm{Q})}\right)^{n-1},\frac{\epsilon}{2(\bar{\gamma}(\bm{Q}) - \underline{\gamma}(\bm{Q}))}\right), \label{eq:WHZ2020_num_samples}
\end{equation}
the expected value of $\min_{1\le i\le M} \bm{\delta}_i^\top\bm{Q}\bm{\delta}_i$ is less than or equal to $\gamma(\bm{Q}) + \epsilon$.

We note that the number of samples shown in \eqref{eq:Unif_829_num_samples} and that in \eqref{eq:WHZ2020_num_samples} are exponential in $n$.
If $\rho$ is sufficiently small, then we have
\begin{equation}
m(\rho,\phi) \simeq \frac{1}{\rho}\log\left(\frac{1}{\phi}\right). \label{eq:M_simeq}
\end{equation}
The error tolerance $\epsilon$ is assumed to be small in practice, thus we obtain
\begin{align}
\eqref{eq:Unif_829_num_samples} &\simeq \left(\frac{\sqrt{2}K(\bm{Q})}{\epsilon}\right)^{n-1}\log\left(\frac{1}{\phi}\right), \label{eq:Unif_829_num_samples_approx}\\
\eqref{eq:WHZ2020_num_samples} &\simeq \left(\frac{2\sqrt{2}K(\bm{Q})}{\epsilon}\right)^{n-1}\log\left(\frac{2(\bar{\gamma}(\bm{Q}) - \underline{\gamma}(\bm{Q}))}{\epsilon}\right). \nonumber
\end{align}
\end{remark}

\subsubsection{Inexact randomized method through uniform sampling from a regular grid of the standard simplex}\label{subsec:GridUnif}
Generally, randomized approaches are used when the scale of problems is large and deterministic approaches cannot handle them.
However, we see from Proposition~\ref{prop:Unif_829} and \eqref{eq:Unif_829_num_samples_approx} that the randomized method in Section~\ref{subsec:SimpUnif} requires an exponential number of samples with respect to $n$ to obtain an $\epsilon$-approximate solution.
In this subsubsection, we discretize the standard simplex by uniformly sampling from its regular grid rather than from the standard simplex.
In Proposition~\ref{prop:GridUnif} shown below, we provide a probabilistic error bound on the optimal value, in which the number of function evaluations is expected to be less than the number of points in the regular grid if $n$ is large.
Recall that a regular grid $\Delta_r^{n-1}$ of the standard simplex $\Delta^{n-1}$ is defined as \eqref{eq:regular_grid}.
For simplicity, for $\bm{\delta} \in \Delta_r^{n-1}$, we define
\begin{equation*}
G_r(\bm{\delta}) \coloneqq B\left(\bm{\delta},\frac{\sqrt{2}}{r}\right) \cap \Delta_r^{n-1}.
\end{equation*}

\begin{lemma}\label{lem:B_lb}
For any $\bm{\delta} \in \Delta_r^{n-1}$, we have $\abs{G_r(\bm{\delta})} \ge n$.
In particular, the equality holds if $\bm{\delta}$ is any of $\bm{e}_1,\dots,\bm{e}_n$.
\end{lemma}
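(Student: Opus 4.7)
The plan is to reduce the geometric condition defining $G_r(\bm{\delta})$ to a purely combinatorial ``swap'' condition on integer vectors, and then count neighbors.

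First I would identify $\Delta_r^{n-1}$ with the lattice $\Lambda_r \coloneqq \{\bm{k} \in \bbZ_+^n \mid \sum_{i=1}^n k_i = r\}$ via $\bm{\delta} \leftrightarrow \bm{k} = r\bm{\delta}$. For $\bm{\delta},\bm{\delta}' \in \Delta_r^{n-1}$ with $\bm{k} = r\bm{\delta}$, $\bm{k}' = r\bm{\delta}'$, we have $\norm{\bm{\delta}-\bm{\delta}'}_2^2 = \norm{\bm{k}-\bm{k}'}_2^2 / r^2$, so $\bm{\delta}' \in G_r(\bm{\delta})$ iff $\norm{\bm{k}-\bm{k}'}_2^2 \le 2$. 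Since $\bm{k}-\bm{k}'$ is an integer vector, this is equivalent to $\norm{\bm{k}-\bm{k}'}_2^2 \in \{0,1,2\}$.

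Next I would use the constraint $\sum_i (k_i - k_i') = r - r = 0$ to rule out the case $\norm{\bm{k}-\bm{k}'}_2^2 = 1$ (a single $\pm 1$ coordinate would make the entries sum to $\pm 1 \ne 0$) and to characterize $\norm{\bm{k}-\bm{k}'}_2^2 = 2$: the only integer vectors with squared $2$-norm equal to $2$ and with coordinate sum zero are those with exactly one entry equal to $+1$, exactly one entry equal to $-1$, and all others zero. Therefore the non-trivial elements $\bm{k}' \in G_r(\bm{\delta})\setminus\{\bm{k}\}$ are exactly those obtained from $\bm{k}$ by a \emph{single-unit swap}: pick a coordinate $i$ with $k_i \ge 1$ (so that $k_i - 1 \ge 0$) and a coordinate $j \ne i$, and set $\bm{k}' = \bm{k} - \bm{e}_i + \bm{e}_j$.

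Finally I would count: letting $s \coloneqq \lvert\{i : k_i \ge 1\}\rvert$ be the support size of $\bm{k}$, the number of such swaps is $s(n-1)$, and adding $\bm{k}$ itself gives
\begin{equation*}
\abs{G_r(\bm{\delta})} = 1 + s(n-1).
\end{equation*}
Since $\bm{\delta} \in \Delta^{n-1}$ forces $s \ge 1$, we obtain $\abs{G_r(\bm{\delta})} \ge n$, and the case $s = 1$ corresponds precisely to $\bm{k} = r\bm{e}_i$, i.e., $\bm{\delta} = \bm{e}_i$, giving the equality.

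The only subtle step is the integrality/sum-zero argument that collapses the geometric ball into the swap neighborhood; once this is recognized the counting is immediate, and I do not foresee a genuine obstacle.
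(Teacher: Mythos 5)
Your proposal is correct, and it is not quite the same argument as the paper's, so a comparison is worthwhile. Both proofs are built on the same unit-swap vectors $\bm{\delta} - \tfrac{1}{r}\bm{e}_i + \tfrac{1}{r}\bm{e}_j$, but the paper only proves the \emph{forward} inclusion $\{\bm{\delta}(j) \mid j \in [n]\} \subseteq G_r(\bm{\delta})$ for a single index $i$ with $\delta_i \ge 1/r$ (which already yields $\abs{G_r(\bm{\delta})} \ge n$), and then handles the equality claim by a separate, vertex-specific case analysis on the first coordinate of a point of $G_r(\bm{e}_1)$, finishing by symmetry. You instead prove the \emph{reverse} inclusion for an arbitrary $\bm{\delta}$: rescaling by $r$ and using integrality together with the zero-sum constraint, you show that the only lattice displacements of squared norm at most $2$ are the zero vector and the vectors $\bm{e}_j - \bm{e}_i$, so $G_r(\bm{\delta})$ is \emph{exactly} the swap neighborhood. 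This buys you the exact formula $\abs{G_r(\bm{\delta})} = 1 + s(n-1)$ with $s$ the support size, from which both the lower bound ($s \ge 1$) and the equality at the vertices ($s = 1$) drop out at once --- indeed you get the stronger ``if and only if'' characterization of equality, whereas the lemma only asserts the ``if'' direction. The one small point you gloss over is that distinct ordered pairs $(i,j)$ produce distinct neighbors, but this is immediate since the difference $\bm{e}_j - \bm{e}_i$ determines $(i,j)$; there is no gap.
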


\begin{proof}
The vector $\bm{\delta}$ has at least one positive element.
In addition, every element of $\bm{\delta}$ is one of $0,1/r,2/r,\dots,1-1/r,1$.
Therefore, there exists $i \in [n]$ such that $\delta_i \ge 1/r$.
We prove $\abs{G_r(\bm{\delta})} \ge n$ by showing that
\begin{equation}
\left\{\bm{\delta} - \frac{1}{r}\bm{e}_i + \frac{1}{r}\bm{e}_j \relmiddle| j \in [n] \right\} \subseteq G_r(\bm{\delta}). \label{eq:B_elements}
\end{equation}
For every $j \in [n]$, we define $\bm{\delta}{(j)} \coloneqq \bm{\delta} - \frac{1}{r}\bm{e}_i + \frac{1}{r}\bm{e}_j \in \Delta_r^{n-1}$.
When $j = i$, we have $\bm{\delta}{(i)} = \bm{\delta}$, so $\bm{\delta}{(i)} \in G_r(\bm{\delta})$ holds.
When $j \neq i$, we have $\norm{\bm{\delta}(j) - \bm{\delta}}_2 = \sqrt{2}/r$, so $\bm{\delta}{(j)} \in G_r(\bm{\delta})$ holds.
Thus, we obtain $\abs{G_r(\bm{\delta})} \ge n$.

In particular, the converse inclusion in \eqref{eq:B_elements} holds when $\bm{\delta} = \bm{e}_1$.
Note that the index $i$ that satisfies $\delta_i \ge 1/r$ has to be $1$, and $\bm{\delta}{(j)} = (1-\frac{1}{r})\bm{e}_1 + \frac{1}{r}\bm{e}_j$ holds.
Let $\hat{\bm{\delta}} \in G_r(\bm{e}_1)$.
If $\hat{\delta}_1 \le 1 - 2/r$, we have
\begin{equation*}
\norm{\hat{\bm{\delta}} - \bm{e}_1}_2 \ge \abs{\hat{\delta}_1 - 1} \ge \frac{2}{r} > \frac{\sqrt{2}}{r},
\end{equation*}
which does not happen since $\hat{\bm{\delta}} \in B(\bm{e}_1,\sqrt{2}/r)$.
If $\hat{\delta}_1 = 1 - 1/r$, there exists $j \in \{2,\dots,n\}$ such that $\hat{\delta}_j = 1/r$ and $\hat{\delta}_k = 0$ for all $k \in \{2,\dots,n\} \setminus \{j\}$.
Then we have $\hat{\bm{\delta}} = \bm{\delta}{(j)}$.
If $\hat{\delta}_1 = 1$, it follows that $\hat{\bm{\delta}} = \bm{e}_1$, so $\hat{\bm{\delta}} = \bm{\delta}{(1)}$ holds.
Therefore, the set $G_r(\bm{e}_1)$ does not contain the elements other than $\bm{\delta}(1),\dots,\bm{\delta}(n)$ and we obtain $\abs{G_r(\bm{e}_1)} = n$.
By symmetry, $\abs{G_r(\bm{\delta})} = n$ also holds when $\bm{\delta}$ is any of $\bm{e}_1,\dots,\bm{e}_n$.
\end{proof}

\begin{proposition}\label{prop:GridUnif}
For a positive integer $r$, let $\bm{\delta}_1,\dots,\bm{\delta}_M$ be a collection of independent and identically distributed random vectors uniformly distributed on $\Delta_r^{n-1}$.
For $\phi \in (0,1)$, if the number $M$ of samples satisfies
\begin{equation}
M \ge m\left(\frac{n}{\abs{\Delta_r^{n-1}}},\phi\right), \label{eq:num_samples_ineq}
\end{equation}
then, with probability at least $1 - \phi$, we have
\begin{equation}
\min_{1\le i\le M} \bm{\delta}_i^\top \bm{Q} \bm{\delta}_i - \gamma(\bm{Q}) \le \frac{1}{r}\left(\sqrt{2}K(\bm{Q}) + \max_{1\le i\le n}Q_{ii} - \underline{\gamma}(\bm{Q})
\right). \label{eq:GridUnif_bound}
\end{equation}
In particular, for a given $\epsilon > 0$, if $r$ satisfies
\begin{equation}
r \ge \frac{1}{\epsilon}\left(\max_{1\le i\le n}Q_{ii} - \underline{\gamma}(\bm{Q})\right), \label{eq:GUnif_r}
\end{equation}
then, with probability at least $1-\phi$, we have
\begin{equation*}
\min_{1\le i\le M} \bm{\delta}_i^\top \bm{Q} \bm{\delta}_i - \gamma(\bm{Q})
\le \left(\frac{\sqrt{2}K(\bm{Q})}{\displaystyle \max_{1\le i\le n}Q_{ii} - \underline{\gamma}(\bm{Q})} + 1\right)\epsilon.
\end{equation*}
\end{proposition}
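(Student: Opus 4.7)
The plan is to combine two ingredients: a deterministic error bound from restricting the minimization to the grid $\Delta_r^{n-1}$ (supplied by the Bomze--de Klerk inequality~\eqref{eq:Bd2002_ub}), and a probabilistic bound guaranteeing that uniform sampling from $\Delta_r^{n-1}$ produces, with high probability, a point close in the $2$-norm to a minimizer of $\bm{\delta}^\top\bm{Q}\bm{\delta}$ over $\Delta_r^{n-1}$. The Lipschitz constant $K(\bm{Q})$ from~\eqref{eq:K(Q)} will then convert proximity on $\Delta_r^{n-1}$ into proximity in objective value, and summing the two errors yields~\eqref{eq:GridUnif_bound}. The structure mirrors Proposition~\ref{prop:Unif_829}, except that Lemma~\ref{lem:B_lb} plays the role of Lemma~\ref{lem:prob_ball_cap_simp_lb} in providing the probability-of-hit estimate, and no volume ratio is needed.

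For the probabilistic step, let $\bm{\delta}_r^\star$ denote a minimizer of $\bm{\delta}^\top\bm{Q}\bm{\delta}$ on $\Delta_r^{n-1}$. Because each $\bm{\delta}_i$ is uniform on $\Delta_r^{n-1}$, Lemma~\ref{lem:B_lb} gives
\begin{equation*}
P\bigl(\bm{\delta}_i \in G_r(\bm{\delta}_r^\star)\bigr) = \frac{\abs{G_r(\bm{\delta}_r^\star)}}{\abs{\Delta_r^{n-1}}} \ge \frac{n}{\abs{\Delta_r^{n-1}}}.
\end{equation*}
By independence, the probability that none of $\bm{\delta}_1,\dots,\bm{\delta}_M$ lies in $G_r(\bm{\delta}_r^\star)$ is at most $\left(1 - n/\abs{\Delta_r^{n-1}}\right)^M$, and rearranging the definition of $m(\cdot,\cdot)$ shows that~\eqref{eq:num_samples_ineq} forces this quantity to be at most $\phi$. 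Hence, on an event $\mathcal{E}$ of probability at least $1-\phi$, there exists $j \in [M]$ with $\norm{\bm{\delta}_j - \bm{\delta}_r^\star}_2 \le \sqrt{2}/r$.

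On $\mathcal{E}$ I would then chain two estimates. First, the Lipschitz bound~\eqref{eq:K(Q)} yields $\bm{\delta}_j^\top\bm{Q}\bm{\delta}_j - (\bm{\delta}_r^\star)^\top\bm{Q}\bm{\delta}_r^\star \le K(\bm{Q})\norm{\bm{\delta}_j-\bm{\delta}_r^\star}_2 \le \sqrt{2}K(\bm{Q})/r$. Second, the deterministic grid bound~\eqref{eq:Bd2002_ub} controls the residual $(\bm{\delta}_r^\star)^\top\bm{Q}\bm{\delta}_r^\star - \gamma(\bm{Q}) = \min_{\bm{\delta}\in\Delta_r^{n-1}}\bm{\delta}^\top\bm{Q}\bm{\delta} - \gamma(\bm{Q}) \le \bigl(\max_{1\le i\le n}Q_{ii} - \underline{\gamma}(\bm{Q})\bigr)/r$. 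Adding these two bounds to the trivial inequality $\min_{1\le i\le M}\bm{\delta}_i^\top\bm{Q}\bm{\delta}_i - \gamma(\bm{Q}) \le \bm{\delta}_j^\top\bm{Q}\bm{\delta}_j - \gamma(\bm{Q})$ proves~\eqref{eq:GridUnif_bound}. The ``in particular'' clause is then a one-line substitution: condition~\eqref{eq:GUnif_r} forces $(\max_i Q_{ii} - \underline{\gamma}(\bm{Q}))/r \le \epsilon$, and simultaneously $\sqrt{2}K(\bm{Q})/r \le \sqrt{2}K(\bm{Q})\epsilon/(\max_i Q_{ii} - \underline{\gamma}(\bm{Q}))$, so factoring $\epsilon$ out of the sum yields the stated coefficient (with $\alpha=1$).

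No step looks technically deep; the only real bookkeeping concern is the passage from the implicit sample-size hypothesis~\eqref{eq:num_samples_ineq} to the explicit tail bound $(1-n/\abs{\Delta_r^{n-1}})^M \le \phi$, since both $\log\phi$ and $\log(1-n/\abs{\Delta_r^{n-1}})$ are negative and the inequality direction must be tracked when one divides through. Everything else amounts to a standard hitting-set-plus-Lipschitz argument layered on top of the already-established bound~\eqref{eq:Bd2002_ub}.
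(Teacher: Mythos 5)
Your proposal is correct and follows essentially the same route as the paper's proof: a minimizer $\bm{\delta}^*$ over the grid, Lemma~\ref{lem:B_lb} to lower-bound the hitting probability of $G_r(\bm{\delta}^*)$, independence to get the tail bound $\bigl(1 - n/\abs{\Delta_r^{n-1}}\bigr)^M \le \phi$, and the two-term decomposition via the Lipschitz constant $K(\bm{Q})$ and the deterministic grid bound~\eqref{eq:Bd2002_ub}. The only difference is that you also spell out the ``in particular'' substitution (with $\alpha=1$), which the paper leaves implicit.
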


\begin{proof}
Let $\bm{\delta}^*$ be an optimal solution of $\min_{\bm{\delta} \in \Delta_r^{n-1}} \bm{\delta}^\top \bm{Q}\bm{\delta}$.
Then the probability that $\bm{\delta}_i \not\in B(\bm{\delta}^*,\sqrt{2}/r)$ holds for all $i \in [M]$ is $(1 - \abs{G_r(\bm{\delta}^*)} / \abs{\Delta_r^{n-1}})^M$ and it is bounded by
\begin{equation*}
\left(1 - \frac{\abs{G_r(\bm{\delta}^*)}}{\abs{\Delta_r^{n-1}}}\right)^M \le \left(1 - \frac{n}{\abs{\Delta_r^{n-1}}}\right)^M \le \phi,
\end{equation*}
where we use Lemma~\ref{lem:B_lb} to derive the first inequality and the second inequality follows from \eqref{eq:num_samples_ineq}.
This implies that with probability at least $1-\phi$, there exists $j \in [M]$ such that $\bm{\delta}_j \in G_r(\bm{\delta}^*)$.
Then we have
\begin{align*}
\min_{1\le i\le M} \bm{\delta}_i^\top \bm{Q} \bm{\delta}_i - \gamma(\bm{Q}) &\le (\bm{\delta}_j^\top \bm{Q} \bm{\delta}_j - (\bm{\delta}^*)^\top \bm{Q} \bm{\delta}^*) + ((\bm{\delta}^*)^\top \bm{Q} \bm{\delta}^* - \gamma(\bm{Q}))\\
&\le K(\bm{Q})\norm{\bm{\delta}_j - \bm{\delta}^*}_2 + \frac{1}{r}\left(\max_{1\le i\le n}Q_{ii} - \underline{\gamma}(\bm{Q})\right)\\
&\le \frac{1}{r}\left(\sqrt{2}K(\bm{Q}) + \max_{1\le i\le n}Q_{ii} - \underline{\gamma}(\bm{Q})\right),
\end{align*}
where we use \eqref{eq:Bd2002_ub} to derive the second inequality and we use $\bm{\delta}_j \in G_r(\bm{\delta}^*)$ to derive the third inequality.
\end{proof}

Proposition~\ref{prop:GridUnif} suggests the possibility that we can compute an approximate value of $\gamma(\bm{Q})$ with a smaller number of function evaluations than that required by the method in Section~\ref{subsec:Grid}.
Using the approximation shown in \eqref{eq:M_simeq}, we see that
\begin{equation*}
\eqref{eq:num_samples_ineq} \simeq \frac{1}{n}\log\left(\frac{1}{\phi}\right) \abs{\Delta_r^{n-1}}.
\end{equation*}
The value $\frac{1}{n}\log(\frac{1}{\phi})$ is expected to be less than $1$ if $n$ is large.
In such a case, with a smaller number of function evaluations than that in Section~\ref{subsec:Grid}, we can derive an approximation of $\gamma(\bm{Q})$ that enjoys the theoretical guarantee shown in \eqref{eq:GridUnif_bound}, although this bound is weaker than \eqref{eq:Bd2002_ub}.

\subsection{Testing complete positivity of a matrix}\label{subsec:test_CP}
A matrix $\bm{A}\in \calS^n$ is said to be \emph{completely positive} if there exist a positive integer $k$ and $\bm{a}_1,\dots,\bm{a}_k \in \bbR_+^n$ such that $\bm{A} = \sum_{i=1}^k \bm{a}_i\bm{a}_i^\top$.
For a matrix $\bm{C} \in \calS^n$, the problem of testing the complete positivity of $\bm{C}$ can be formulated as the following copositive programming problem with a ball constraint:
\begin{equation}
\begin{alignedat}{3}
&\minimize_{\bm{X}} && \quad \langle\bm{C},\bm{X}\rangle \\
&\subjectto && \quad \bm{X} \in \COP^n,\\
&&& \quad \bm{X} \in B(\bm{O},1).
\end{alignedat}\label{prob:test_CP}
\end{equation}
Without loss of generality, we may assume that $\norm{\bm{C}}_{\rm F} = 1$.
Since $\bm{X} = \bm{O}$ is a feasible solution of this problem, its optimal value is at most $0$.
In addition, by the duality between completely positive cones and copositive cones~\cite[Theorem~2.1]{HN1963}, the matrix $\bm{C}$ is completely positive if and only if the optimal value is $0$.

We can reformulate \eqref{prob:test_CP} in the form of \eqref{prob:SIP}, namely,
\begin{equation}
\begin{alignedat}{3}
&\minimize_{\bm{X}} && \quad f(\bm{X}) \coloneqq  \langle\bm{C},\bm{X}\rangle \\
&\subjectto && \quad g(\bm{X};\bm{\delta}) \coloneqq \langle \bm{X},-\bm{\delta}\bm{\delta}^\top\rangle \le 0\ \text{for all $\bm{\delta} \in \Delta^{n-1}$},\\
&&& \quad \bm{X} \in B(\bm{O},1).
\end{alignedat}\label{prob:test_CP_SIP}
\end{equation}
Therefore, we can directly apply Algorithm~\ref{alg:proposed} developed in Section~\ref{sec:SIP} to Problem~\eqref{prob:test_CP_SIP}.
As in Section~\ref{subsec:general_COPP}, the subproblem $G(\bm{X}) \coloneqq \max_{\bm{\delta}\in \Delta^{n-1}}g(\bm{X};\bm{\delta})$ that needs to be solved at each iteration is essentially the standard quadratic programming problem with coefficient matrix $\bm{X}$.
The calculation of the constant $L_f$ is the same as that in Section~\ref{subsec:general_COPP}; we can set $L_f = 1$ under the assumption that $\norm{\bm{C}}_{\rm F} = 1$.
In addition, we can set $L_g = 1$ since $\partial_{\bm{X}}g(\bm{X};\bm{\delta}) = \{-\bm{\delta}\bm{\delta}^\top\}$ and
\begin{equation*}
\norm{-\bm{\delta}\bm{\delta}^\top}_{\rm F} = \norm{\bm{\delta}}_2^2 \le \left(\sum_{i=1}^n \delta_i\right)^2 = 1
\end{equation*}
hold for any $\bm{\delta} \in \Delta^{n-1}$.
Then $L \coloneqq \max\{L_f,L_g\}$ equals $1$ and Theorem~\ref{thm:convergence_SIP} can be translated into the following proposition.

\begin{proposition}\label{prop:convergence_test_CP}
We apply Algorithm~\ref{alg:proposed} to \eqref{prob:test_CP_SIP}, in which we set the initial point $\bm{X}_1$ to $\bm{O}$.
Let $N$ be a positive integer such that
\begin{equation}
N \ge \frac{1}{\epsilon^2}. \label{eq:num_iter_test_CP}
\end{equation}
For each iteration indexed by $k$, we find $\bm{\delta}_k \in \Delta^{n-1}$ such that $G(\bm{X}_k) - g(\bm{X}_k;\bm{\delta}_k) \le \alpha\epsilon$.
Then the set
\begin{equation}
I_N \coloneqq \{ k \in [N] \mid g(\bm{X}_k;\bm{\delta}_k) \le \epsilon\} \label{eq:IN_test_CP}
\end{equation}
is non-empty and for a minimizer $k^*$ of $f(\bm{X}_k)$ over $k \in I_N$, the following two inequalities hold:
\begin{align*}
f(\bm{X}_{k^*}) &\le f^* + \epsilon,\\
G(\bm{X}_{k^*}) &\le (1+\alpha)\epsilon.
\end{align*}
\end{proposition}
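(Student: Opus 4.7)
The plan is to recognize that Proposition~\ref{prop:convergence_test_CP} is a direct specialization of Theorem~\ref{thm:convergence_SIP} to Problem~\eqref{prob:test_CP_SIP}, so the proof reduces to (i) verifying that Assumption~\ref{asm:SIP} holds in this setting, (ii) identifying the constants $L$ and $\dist(\bm{X}_1, S^*)$ that appear in the iteration-complexity bound \eqref{eq:N_lb}, and (iii) reading off the conclusions from Theorem~\ref{thm:convergence_SIP}.

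First, I would check Assumption~\ref{asm:SIP} component by component. The ambient space $V = \calS^n$ equipped with the Frobenius inner product is finite-dimensional; $S = B(\bm{O},1)$ is non-empty, closed, and convex; $\Delta^{n-1}$ is non-empty and compact; and $S^*$ is non-empty because the feasible set is compact, the objective is continuous, and $\bm{X} = \bm{O}$ is feasible (witnessing that the optimal value is at most $0$). The objective $f(\bm{X}) = \langle \bm{C}, \bm{X}\rangle$ is linear with $\partial f(\bm{X}) = \{\bm{C}\}$ everywhere, and, by the assumption $\norm{\bm{C}}_{\rm F}=1$ stated in Section~\ref{subsec:test_CP}, we may take $L_f = 1$. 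For each $\bm{\delta} \in \Delta^{n-1}$, the function $g(\cdot;\bm{\delta})$ is linear in $\bm{X}$ with $\partial_{\bm{X}} g(\bm{X};\bm{\delta}) = \{-\bm{\delta}\bm{\delta}^\top\}$, and the bound
\begin{equation*}
\norm{-\bm{\delta}\bm{\delta}^\top}_{\rm F} = \norm{\bm{\delta}}_2^2 \le \left(\sum_{i=1}^n \delta_i\right)^2 = 1
\end{equation*}
(already recorded just above the statement) lets us take $L_g = 1$. Finally, for each fixed $\bm{X}$, the map $\bm{\delta} \mapsto g(\bm{X};\bm{\delta})$ is a quadratic form, hence continuous on $\Delta^{n-1}$. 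So Assumption~\ref{asm:SIP} holds with $L = \max\{L_f, L_g\} = 1$.

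Second, I would bound $\dist(\bm{X}_1, S^*)$ using the choice $\bm{X}_1 = \bm{O}$ and the ball constraint. Since $S^* \subseteq S = B(\bm{O},1)$, every $\bm{X}^* \in S^*$ satisfies $\norm{\bm{X}^*}_{\rm F} \le 1$, and therefore $\dist(\bm{O}, S^*) \le 1$. Combined with $L = 1$, this gives
\begin{equation*}
\frac{L^2 \dist(\bm{X}_1, S^*)^2}{\epsilon^2} \le \frac{1}{\epsilon^2},
\end{equation*}
so the hypothesis \eqref{eq:num_iter_test_CP} is at least as strong as \eqref{eq:N_lb} and hence implies it.

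Third, I would invoke Theorem~\ref{thm:convergence_SIP}: because the per-iteration oracle produces $\bm{\delta}_k$ with $G(\bm{X}_k) - g(\bm{X}_k;\bm{\delta}_k) \le \alpha \epsilon$, the theorem guarantees that the set $I_N$ in \eqref{eq:IN_test_CP} is non-empty and that, for a minimizer $k^*$ of $f(\bm{X}_k)$ over $k \in I_N$, the inequalities $f(\bm{X}_{k^*}) \le f^* + \epsilon$ and $G(\bm{X}_{k^*}) \le (1+\alpha)\epsilon$ hold, which is exactly the claim. Honestly, there is no real obstacle here; the only place a reader might pause is in checking that the distance to $S^*$ can be bounded by the radius of the ball, which is immediate from $\bm{X}_1 = \bm{O}$ together with $S^* \subseteq B(\bm{O},1)$.
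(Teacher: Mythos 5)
Your proof is correct and follows essentially the same route as the paper: the paper likewise sets $L=1$ (computed just before the proposition), bounds $\dist(\bm{X}_1,S^*)\le 1$ via $\bm{X}_1=\bm{O}$ and $S^*\subseteq B(\bm{O},1)$, and then applies Theorem~\ref{thm:convergence_SIP}. Your additional verification of Assumption~\ref{asm:SIP} is sound but not part of the paper's (shorter) argument.
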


\begin{proof}
Since $\bm{X}_1 = \bm{O}$ and the set $S^*$ of optimal solutions of Problem~\eqref{prob:test_CP_SIP} is included in $B(\bm{O},1)$, we have
\begin{equation}
\dist(\bm{X}_1,S^*) \le 1. \label{eq:test_CP_ub_dist}
\end{equation}
Substituting $L = 1$ and \eqref{eq:test_CP_ub_dist} into \eqref{eq:N_lb}, Theorem~\ref{thm:convergence_SIP} leads to the desired result.
\end{proof}

\begin{remark}
By vectorizing the matrices in \eqref{prob:test_CP}, we can reformulate Problem~\eqref{prob:test_CP} as a problem of the form \eqref{prob:COPP_SIP}, to which the discussion in Section~\ref{subsec:general_COPP} applies.
Specifically, letting $\bm{E}_i \coloneqq \bm{e}_i\bm{e}_i^\top$ for each $i = 1,\dots,n$ and $\bm{E}_{ij} \coloneqq (\bm{e}_i\bm{e}_j^\top + \bm{e}_j\bm{e}_i^\top) / \sqrt{2}$ for each $1\le i < j\le n$ and introducing variables
\begin{equation*}
\bm{x} = (x_{11},x_{12},x_{22},\dots,x_{1n},\dots,x_{nn})^\top \in \bbR^{\frac{n(n+1)}{2}},
\end{equation*}
we can reformulate Problem~\eqref{prob:test_CP} as
\begin{equation}
\begin{alignedat}{3}
&\minimize_{\bm{x}} && \quad \tilde{f}(\bm{x}) \coloneqq \sum_{i=1}^n C_{ii}x_{ii} + \sum_{1\le i < j\le n}\sqrt{2}C_{ij}x_{ij} \\
&\subjectto && \quad \tilde{g}(\bm{x};\bm{\delta}) \coloneqq \bm{\delta}^\top\left(-\sum_{i=1}^n x_{ii}\bm{E}_i - \sum_{1\le i < j \le n}x_{ij}\bm{E}_{ij}\right)\bm{\delta} \le 0\ \text{for all $\bm{\delta} \in \Delta^{n-1}$},\\
&&& \quad \bm{x} \in B(\bm{0},1).
\end{alignedat} \label{prob:test_CP_vecvar}
\end{equation}

However, the constant $L \coloneqq \max\{L_{\tilde{f}},L_{\tilde{g}}\}$ calculated as \eqref{eq:est_L} depends on $n$ unlike the formulation in \eqref{prob:test_CP_SIP}.
The coefficient vector, denoted by $\bm{c} \in \bbR^{n(n+1)/2}$, corresponding to the objective function in \eqref{prob:test_CP_vecvar} satisfies $\norm{\bm{c}}_2 = \norm{\bm{C}}_{\rm F} = 1$, so we have $L_{\tilde{f}} = 1$.
In addition, by the discussion in Section~\ref{subsec:general_COPP}, the constant $L_{\tilde{g}}$ can be set to
\begin{align*}
L_{\tilde{g}} = \sqrt{
\sum_{i=1}^n\left(\max_{1\le k \le l \le n}\abs{(\bm{E}_i)_{kl}} \right)^2 + \sum_{1\le i < j\le n}\left(\max_{1\le k \le l \le n}\abs{(\bm{E}_{ij})_{kl}}
\right)^2} = \frac{\sqrt{n^2 + 3n}}{2}.
\end{align*}
Therefore, we have $L = \sqrt{n^2 + 3n}/2$ and the iteration-complexity bound shown in \eqref{eq:N_lb} increases as $n$ rises.
\end{remark}

Since the approximate solution $\bm{X}_{k^*}$ may be an infeasible solution of Problem~\eqref{prob:test_CP}, the negativity of the approximate optimal value $\langle\bm{C},\bm{X}_{k^*}\rangle$ does not necessarily mean that $\bm{C}$ is not completely positive.
However, as shown in the following theorem, if the approximate optimal value is less than a threshold, $\bm{C}$ is guaranteed not to be completely positive.

\begin{theorem}\label{thm:suff_cond_not_CP}
We apply Algorithm~\ref{alg:proposed} to \eqref{prob:test_CP_SIP}, in which we set the initial point $\bm{X}_1$ to $\bm{O}$.
Let $N$ be a positive integer satisfying \eqref{eq:num_iter_test_CP}.
For each iteration indexed by $k$, we find $\bm{\delta}_k \in \Delta^{n-1}$ such that $G(\bm{X}_k) - g(\bm{X}_k;\bm{\delta}_k) \le \alpha\epsilon$.
After $N$ iterations of the algorithm, we let $k^* \in \argmin\{\langle \bm{C},\bm{X}_k\rangle \mid k \in I_N\}$, where $I_N$ is defined as \eqref{eq:IN_test_CP}.
If the inequality
\begin{equation}
\langle \bm{C},\bm{X}_{k^*}\rangle < -n(1+\alpha)\epsilon \label{eq:suff_cond_not_CP}
\end{equation}
holds, $\bm{C}$ is not completely positive.
\end{theorem}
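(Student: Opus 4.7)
The plan is to argue by contradiction: assume $\bm{C}$ is completely positive and derive a lower bound on $\langle \bm{C},\bm{X}_{k^*}\rangle$ that contradicts \eqref{eq:suff_cond_not_CP}. The principal difficulty is that $\bm{X}_{k^*}$ is only approximately feasible, so I cannot directly plug it into the copositive inequality; I must first repair it into a genuinely copositive matrix and then control the cost of that repair.

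First I would invoke Proposition~\ref{prop:convergence_test_CP} to conclude $G(\bm{X}_{k^*}) \le (1+\alpha)\epsilon$. Recalling $G(\bm{X}) = \max_{\bm{\delta}\in \Delta^{n-1}}\langle \bm{X},-\bm{\delta}\bm{\delta}^\top\rangle$, this is equivalent to $\bm{\delta}^\top \bm{X}_{k^*}\bm{\delta} \ge -(1+\alpha)\epsilon$ for every $\bm{\delta}\in\Delta^{n-1}$. By homogeneity, for any $\bm{x}\in \bbR_+^n$ with $s \coloneqq \sum_{i=1}^n x_i$, setting $\bm{\delta}\coloneqq \bm{x}/s$ (trivially if $s=0$) gives $\bm{x}^\top \bm{X}_{k^*}\bm{x} \ge -(1+\alpha)\epsilon\, s^2 = -(1+\alpha)\epsilon\, \bm{x}^\top \bm{E}\bm{x}$, where $\bm{E}$ denotes the $n\times n$ all-ones matrix. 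Consequently, the shifted matrix
\begin{equation*}
\widehat{\bm{X}} \coloneqq \bm{X}_{k^*} + (1+\alpha)\epsilon\, \bm{E}
\end{equation*}
satisfies $\bm{x}^\top \widehat{\bm{X}}\bm{x} \ge 0$ for every $\bm{x}\in\bbR_+^n$, i.e., $\widehat{\bm{X}} \in \COP^n$.

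Now suppose, for contradiction, that $\bm{C}$ is completely positive, so $\bm{C} = \sum_{i=1}^k \bm{a}_i\bm{a}_i^\top$ with $\bm{a}_i\in \bbR_+^n$. Since $\widehat{\bm{X}}\in\COP^n$, each summand satisfies $\bm{a}_i^\top \widehat{\bm{X}}\bm{a}_i \ge 0$, and therefore $\langle \bm{C},\widehat{\bm{X}}\rangle = \sum_{i=1}^k \bm{a}_i^\top \widehat{\bm{X}}\bm{a}_i \ge 0$. Expanding the definition of $\widehat{\bm{X}}$ rearranges this to
\begin{equation*}
\langle \bm{C},\bm{X}_{k^*}\rangle \ge -(1+\alpha)\epsilon\, \langle \bm{C},\bm{E}\rangle.
\end{equation*}
To close the argument I bound $\langle \bm{C},\bm{E}\rangle$ by Cauchy--Schwarz using the normalization $\norm{\bm{C}}_{\rm F}=1$: $\langle \bm{C},\bm{E}\rangle \le \norm{\bm{C}}_{\rm F}\norm{\bm{E}}_{\rm F} = n$. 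Combining the two inequalities yields $\langle \bm{C},\bm{X}_{k^*}\rangle \ge -n(1+\alpha)\epsilon$, which directly contradicts the hypothesis \eqref{eq:suff_cond_not_CP}. Hence $\bm{C}$ cannot be completely positive.

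The key step that requires care is the construction of $\widehat{\bm{X}}$: the shift must be exactly calibrated to the constraint-violation bound $(1+\alpha)\epsilon$ delivered by Proposition~\ref{prop:convergence_test_CP}, and the choice of $\bm{E}$ (rather than, say, the identity) is what allows the scalar $s^2$ arising from the homogeneity step to be absorbed into $\bm{x}^\top\bm{E}\bm{x}$. The remaining ingredients---the $\COP^n$ pairing with completely positive matrices and the Cauchy--Schwarz bound on $\langle \bm{C},\bm{E}\rangle$---are standard and require only the normalization $\norm{\bm{C}}_{\rm F}=1$ that was imposed without loss of generality before Problem~\eqref{prob:test_CP_SIP}.
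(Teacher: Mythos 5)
Your proof is correct, and it reaches the paper's exact threshold $-n(1+\alpha)\epsilon$ by a genuinely different route. The paper repairs $\bm{X}_{k^*}$ by projecting it onto $\COP^n$, setting $\bar{\bm{X}} \coloneqq P_{\COP^n}(\bm{X}_{k^*})$, and then invokes the error bound $\norm{\bm{X}_{k^*}-\bar{\bm{X}}}_{\rm F}\le n\max\{G(\bm{X}_{k^*}),0\}$ from Hiriart-Urruty and Seeger to control the repair cost, finishing with Cauchy--Schwarz on $\abs{\langle\bm{C},\bm{X}_{k^*}\rangle-\langle\bm{C},\bar{\bm{X}}\rangle}$. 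You instead repair by an explicit additive shift, $\widehat{\bm{X}}\coloneqq\bm{X}_{k^*}+(1+\alpha)\epsilon\,\bm{E}$, whose copositivity follows from the homogeneity identity $(\sum_i x_i)^2=\bm{x}^\top\bm{E}\bm{x}$, and you pay for the shift through $\langle\bm{C},\bm{E}\rangle\le\norm{\bm{C}}_{\rm F}\norm{\bm{E}}_{\rm F}=n$. The factor $n$ thus enters for two different reasons: in the paper it is the constant in the cited distance-to-cone error bound, whereas in your argument it is $\norm{\bm{E}}_{\rm F}$. Your version is entirely self-contained (no projection, no external error bound, and the CP--COP pairing is verified directly from the decomposition $\bm{C}=\sum_i\bm{a}_i\bm{a}_i^\top$ rather than quoted as duality), at the cost of being slightly longer; the paper's version is shorter given the citation and produces an explicit copositive matrix $\bar{\bm{X}}$ that is close to $\bm{X}_{k^*}$ in Frobenius norm, which is a somewhat stronger intermediate fact than the mere existence of a cheap copositive shift. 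Both arguments are valid and neither dominates the other.
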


\begin{proof}
By Proposition~\ref{prop:convergence_test_CP}, the inequality
\begin{equation}
G(\bm{X}_{k^*}) \le (1 + \alpha)\epsilon \label{eq:ineq_G_test_CP}
\end{equation}
holds.
Let $\bar{\bm{X}} \coloneqq P_{\COP^n}(\bm{X}_{k^*})$.
Then it follows that
\begin{equation*}
\norm{\bm{X}_{k^*} - \bar{\bm{X}}}_{\rm F} \le n\max\{G(\bm{X}_{k^*}),0\} \le n (1 + \alpha)\epsilon,
\end{equation*}
where we use \cite[Equation~(6.16)]{HS2010} to derive the first inequality and we use \eqref{eq:ineq_G_test_CP} to derive the second inequality.
Using this inequality, the Cauchy--Schwarz inequality, and $\norm{\bm{C}}_{\rm F} = 1$, we have
\begin{equation*}
\abs{\langle \bm{C},\bm{X}_{k^*}\rangle - \langle \bm{C},\bar{\bm{X}}\rangle} \le n (1 + \alpha)\epsilon.
\end{equation*}
Combining this with the assumption in \eqref{eq:suff_cond_not_CP}, we see that
\begin{equation*}
\langle \bm{C},\bar{\bm{X}}\rangle \le \langle \bm{C},\bm{X}_{k^*}\rangle + n (1 + \alpha)\epsilon < 0.
\end{equation*}
Since $\bar{\bm{X}} \in \COP^n$, this inequality implies that $\bm{C}$ is not completely positive.
\end{proof}

\begin{remark}\label{rem:set_eps}
We note that the inequality $\langle \bm{C},\bm{X}_{k^*} \rangle \ge -1$ holds by the Cauchy--Schwarz inequality, $\norm{\bm{C}}_{\rm F} = 1$, and the constraint $\bm{X}_{k^*} \in B(\bm{O},1)$.
This implies that $\epsilon$ must satisfy $\epsilon < \frac{1}{n(1 + \alpha)}$ to use the condition in \eqref{eq:suff_cond_not_CP}.
For example, we can set
\begin{equation*}
\epsilon \coloneqq \frac{1}{tn(1+\alpha)} \label{eq:eps_suff_cond_not_CP}
\end{equation*}
for a constant $t > 1$, in which case only
\begin{equation}
t^2(1+\alpha)^2 n^2 = O(n^2) \label{eq:num_iter_test_CP_c}
\end{equation}
iterations are necessary to test non-complete positivity by using Theorem~\ref{thm:suff_cond_not_CP}.
Note, however, that $t$ may need to be chosen large in order to detect non-complete positivity, in which case the number of iterations in \eqref{eq:num_iter_test_CP_c} also becomes large.
We revisit the choice of $t$ in Section~\ref{subsec:NE_CP}.
\end{remark}

\begin{remark}\label{rem:certify_nonCP}
The number of iterations shown in \eqref{eq:num_iter_test_CP} is necessary to guarantee that the set $I_N$ is non-empty.
However, in running the algorithm, we may find a matrix $\bm{X}_k$ satisfying $g(\bm{X}_k;\bm{\delta}_k) \le \epsilon$ even if $k < 1/\epsilon^2$.
If we obtain a matrix $\bm{X}_k$ such that $g(\bm{X}_k;\bm{\delta}_k) \le \epsilon$ and $\langle \bm{C},\bm{X}_{k}\rangle < -n(1+\alpha)\epsilon$, the matrix $\bm{X}_k$ provides a certificate that $\bm{C}$ is not completely positive.
\end{remark}

\subsection{The case of symmetric cones beyond nonnegative orthants}\label{subsec:symcone}
In the previous subsections, we discussed how to solve copositive programming problems constrained by the copositive cone $\COP^n$.
Recently, studies have been conducted on copositive cones determined by \emph{symmetric cones} (self-dual and homogeneous cones) beyond nonnegative orthants, and on their associated copositive programming problems~\cite{NL20XX,NL2025,NN2024_Approximation,NN2024_Generalizations,Orlitzky2021}.
Let $\bbK$ be a symmetric cone in a finite-dimensional real vector space $\bbV$ equipped with an inner product denoted by $\bullet$.
\emph{The copositive cone over the symmetric cone $\bbK$}, denoted by $\COP(\bbK)$, is the cone of self-adjoint linear transformations $A$ on $\bbV$ such that $x \bullet A(x) \ge 0$ for all $x \in \bbK$.
If $\bbK$ is the nonnegative orthant $\bbR_+^n$, the cone $\COP(\bbR_+^n)$ can be identified with $\COP^n$.

In principle, we can use Algorithm~\ref{alg:proposed} to solve copositive programming problems involving a copositive cone over a symmetric cone $\bbK$.
We let $e$ be an interior point in $\bbK$ and define
\begin{equation*}
\Delta_{\bbK,e} \coloneqq \{\delta \in \bbK \mid e\bullet \delta = 1\}.
\end{equation*}
The set $\Delta_{\bbK,e}$ can be regarded as a generalization of the standard simplex; indeed, by setting $\bbK$ to $\bbR_+^n$ and $e$ to the vector with all elements $1$, we have $\Delta_{\bbK,e} = \Delta^{n-1}$.
Generally, $\Delta_{\bbK,e}$ is a compact slice of $\bbK$, and $A \in \COP(\bbK)$ if and only if $\delta \bullet A(\delta) \ge 0$ for all $\delta\in \Delta_{\bbK,e}$.
The subproblem that we need to solve at each iteration of Algorithm~\ref{alg:proposed} is essentially
\begin{equation}
\min_{\delta \in \Delta_{\bbK,e}} \delta \bullet Q(\delta), \label{eq:gstQP}
\end{equation}
where $Q$ is a self-adjoint linear transformation on $\bbV$.
Problem~\eqref{eq:gstQP} corresponds to the standard quadratic programming problem in \eqref{eq:stQP}.
Whereas we can utilize the mixed-integer linear programming reformulation to solve standard quadratic programming problems globally, it is unknown how to obtain a global solution to \eqref{eq:gstQP}.

In what follows, we explore a potential approach to deriving a global solution to Problem~\eqref{eq:gstQP}.
It follows from \cite[Section~3]{FCC2020} that the optimal value of \eqref{eq:gstQP} is equal to that of the following copositive programming problem with a scalar variable:
\begin{equation*}
\max_{\lambda\in \bbR}\{ \lambda \mid Q - \lambda e\otimes e \in \COP(\bbK)\}.
\end{equation*}
An optimal solution of this problem, denoted by $\lambda^*$, can be found by combining membership testing for $\COP(\bbK)$ with the bisection method.
Despite the absence of established numerical methods for the membership problem of $\COP(\bbK)$, Orlitzky~\cite{Orlitzky2021} explores a potential extension of a recursive method to solve the membership problem for $\COP(\bbR_+^n)$ to that for $\COP(\bbK)$.
Since $Q - \lambda^* e\otimes e$ lies in the boundary of $\COP(\bbK)$, there exists $\delta^* \in \Delta_{\bbK,e}$ such that
\begin{equation*}
0 = \delta^* \bullet (Q - \lambda^* e\otimes e)(\delta^*) = \delta^* \bullet Q(\delta^*) - \lambda^*,
\end{equation*}
i.e., $\delta^* \bullet Q(\delta^*) = \lambda^*$.
Therefore, $\delta^*$ is an optimal solution of Problem~\eqref{eq:gstQP}.

\section{Numerical experiments} \label{sec:experiment}
In this section, we verify the effectiveness of the methods introduced so far through numerical experiments.
As outlined in Section~\ref{subsec:general_COPP}, four distinct methods for solving standard quadratic programming problems were presented.
We compare these approaches in Section~\ref{subsec:NE_stQP}.
In Section~\ref{subsec:NE_COPP}, we compare the performance of these methods when they are incorporated into Algorithm~\ref{alg:proposed} for solving copositive programming problems.
In Section~\ref{subsec:NE_CP}, we demonstrate that the approach introduced in Section~\ref{subsec:test_CP} is effective in detecting non-complete positivity of a matrix.

All experiments were conducted in MATLAB (R2025a) on a computer with an Apple M1 and \qty{16}{\giga\byte} memory.
The Gurobi solver~\cite{Gurobi} (version 12.0.2) was used to solve mixed-integer linear programming problems.

\subsection{Comparison among the methods for solving standard quadratic programming problems}\label{subsec:NE_stQP}
We introduced four methods to solve standard quadratic programming problems in Sections~\ref{subsec:MILP}, \ref{subsec:Grid}, \ref{subsec:SimpUnif}, and \ref{subsec:GridUnif}.
In this subsection, we compare them numerically in terms of solution accuracy and computational time.
The standard quadratic programming problem solved here is of the form \eqref{eq:stQP}.
For each $n\in \{5,10,50,100,500,1000\}$, we generated ten coefficient matrices $\bm{Q} \in \calS^n$ whose elements independently followed the uniform distribution on the interval $[-1,1]$.

First, we compare the methods presented in Sections~\ref{subsec:MILP}, \ref{subsec:Grid}, and \ref{subsec:SimpUnif} (denoted \textsf{MILP}, \textsf{Grid}, and \textsf{SUnif}, respectively).
We set the parameters in \textsf{Grid} and \textsf{SUnif} as follows.
In both methods, we set $\epsilon$ that appears in \eqref{eq:Grid_r} and \eqref{eq:Unif_829_num_samples} to $1$ or $0.1$.
When we used \textsf{Grid}, we set $r$ to the minimum positive integer satisfying \eqref{eq:Grid_r}.
When we used \textsf{SUnif}, we set the number $M$ of samples to the minimum integer satisfying \eqref{eq:Unif_829_num_samples} and set $\phi$ to $0.05$, so that \eqref{eq:SimpUnif_approxsol} holds with probability at least $0.95$.
For each $(n,\epsilon)$, we solved ten instances using each method and then took the average of the results.
The maximum execution time was set to \qty{3600}{\second}.

\begin{table}[tbp]
\centering
\small
\caption{The results of solving standard quadratic programming problems by using \textsf{MILP}, \textsf{Grid}, and \textsf{SUnif}.
The average objective values obtained by solving standard quadratic programming problems with each method are listed in the ``Obj. val.'' column and the average computational time required to solve them is listed in the ``Time [s]'' column.
The ``Dev. from \textsf{MILP}'' column denotes the difference between each average objective value and that obtained by using \textsf{MILP} for the same $n$.
The ``$M$'' column denotes the average number $M$ of points $\bm{\delta}_1,\dots,\bm{\delta}_M \in \Delta^{n-1}$ that are used to calculate an approximate optimal value $\min_{1\le i\le M} \bm{\delta}_i^\top \bm{Q}\bm{\delta}_i$ for a standard quadratic programming problem of the form \eqref{eq:stQP}.
The symbol $>$\texttt{realmax} in the ``$M$'' column indicates that the average number of points exceeds \texttt{realmax} in MATLAB, which is the largest finite floating-point number in IEEE double precision (approximately \num{1.7977e+308}).}
\begin{tabular}{
r
l
r
S[table-number-alignment=right,table-format=1.1e1]
S[table-number-alignment=right,table-format=+1.2e+1]
S[table-number-alignment=right,table-format=1.2e+1]
S[table-number-alignment=right,table-format=1.1e+1]}
\hline
\multicolumn{1}{l}{$n$}  & Method                             & \multicolumn{1}{l}{$\epsilon$} & \multicolumn{1}{l}{$M$}           & \multicolumn{1}{l}{Obj. val.}          & \multicolumn{1}{l}{Dev. from \textsf{MILP}} &  \multicolumn{1}{l}{Time [\unit{\second}]}       \\ \hline
5    & \textsf{MILP}     & \multicolumn{1}{r}{---}        & \multicolumn{1}{r}{---}                & -7.96E-01          & \multicolumn{1}{r}{---}                                      & 3.3E-03            \\
     & \textsf{Grid}     & 1          & 1.5E+01            & -7.90E-01          & 5.54E-03                                 & 2.9E-03            \\
     &                                    & 0.1        & 6.3E+03            & -7.96E-01          & 1.01E-04   & 1.6E-02            \\
     & \textsf{SUnif} & 1          & 1.2E+03            & -6.27E-01          & 1.69E-01                                 & 2.8E-02            \\
     &                                    & 0.1        & 1.2E+07            & -7.75E-01          & 2.04E-02                                 & 5.3E-01            \\ \hline
10   & \textsf{MILP}     & \multicolumn{1}{r}{---}        & \multicolumn{1}{r}{---}                & -8.69E-01          & \multicolumn{1}{r}{---}                                      & 7.7E-03            \\
     & \textsf{Grid}     & 1          & 5.5E+01            & -8.64E-01          & 5.00E-03                                 & 2.7E-03            \\
     &                                   & 0.1        & 3.3E+06            & -8.69E-01          & 6.00E-06                                 & 2.8E+00            \\
     & \textsf{SUnif} & 1          & 3.6E+08            & -6.84E-01          & 1.85E-01                                 & 3.3E+00            \\
     &                                    & 0.1        & 3.6E+16 & \multicolumn{1}{r}{---} & \multicolumn{1}{r}{---}                       & \multicolumn{1}{r}{$>$3600} \\ \hline
50   & \textsf{MILP}     & \multicolumn{1}{r}{---}        & \multicolumn{1}{r}{---}                & -9.68E-01          & \multicolumn{1}{r}{---}                                      & 2.6E-01            \\
     & \textsf{Grid}     & 1          & 1.3E+03            & -9.67E-01          & 9.66E-04                                 & 7.3E-03            \\
     &                                    & 0.1        & 9.9E+16 & \multicolumn{1}{r}{---} & \multicolumn{1}{r}{---}                       & \multicolumn{1}{r}{$>$3600} \\
     & \textsf{SUnif} & 1     & 2.8E+55 & \multicolumn{1}{r}{---} & \multicolumn{1}{r}{---}                       & \multicolumn{1}{r}{$>$3600} \\
     &  & 0.1     & 2.8E+104 & \multicolumn{1}{r}{---} & \multicolumn{1}{r}{---}                       & \multicolumn{1}{r}{$>$3600} \\ \hline
100  & \textsf{MILP}     & \multicolumn{1}{r}{---}        & \multicolumn{1}{r}{---}                & -9.78E-01          & \multicolumn{1}{r}{---}                                      & 1.0E+00            \\
     & \textsf{Grid}     & 1          & 5.1E+03            & -9.78E-01          & 1.39E-04                                 & 2.3E-02            \\
     &                                    & 0.1        & 2.5E+22 & \multicolumn{1}{r}{---} & \multicolumn{1}{r}{---}                       & \multicolumn{1}{r}{$>$3600} \\
     & \textsf{SUnif} & 1     & 1.0E+126 & \multicolumn{1}{r}{---} & \multicolumn{1}{r}{---}                       & \multicolumn{1}{r}{$>$3600} \\
     &  & 0.1     & 1.0E+225 & \multicolumn{1}{r}{---} & \multicolumn{1}{r}{---}                       & \multicolumn{1}{r}{$>$3600} \\ \hline
500  & \textsf{MILP}     & \multicolumn{1}{r}{---}        & \multicolumn{1}{r}{---}                & -9.95E-01          & \multicolumn{1}{r}{---}                                      & 1.4E+02            \\
     & \textsf{Grid}     & 1          & 1.3E+05            & -9.95E-01          & 6.89E-05                                 & 8.2E-01            \\
     &                                    & 0.1        &  5.7E+35 & \multicolumn{1}{r}{---} & \multicolumn{1}{r}{---}                       & \multicolumn{1}{r}{$>$3600} \\
     & \textsf{SUnif} & 1     & \multicolumn{1}{r}{$>$\texttt{realmax}} & \multicolumn{1}{r}{---} & \multicolumn{1}{r}{---}                       & \multicolumn{1}{r}{$>$3600} \\
     & & 0.1     & \multicolumn{1}{r}{$>$\texttt{realmax}} & \multicolumn{1}{r}{---} & \multicolumn{1}{r}{---}                       & \multicolumn{1}{r}{$>$3600} \\ \hline
1000 & \textsf{MILP}     & \multicolumn{1}{r}{---}        & \multicolumn{1}{r}{---}                & \multicolumn{1}{r}{$>$3600} & \multicolumn{1}{r}{---}                                      & \multicolumn{1}{r}{$>$3600} \\
     & \textsf{Grid}     & 1          & \num{5.0E+05}            & -9.98E-01          & \multicolumn{1}{r}{---}                                      & 8.4E+00            \\
     &                                    & 0.1        &  5.0E+41 & \multicolumn{1}{r}{---} & \multicolumn{1}{r}{---}                                      & \multicolumn{1}{r}{$>$3600} \\
     & \textsf{SUnif} & 1     & \multicolumn{1}{r}{$>$\texttt{realmax}} & \multicolumn{1}{r}{---} & \multicolumn{1}{r}{---}                                      & \multicolumn{1}{r}{$>$3600} \\
     &  & 0.1     & \multicolumn{1}{r}{$>$\texttt{realmax}} & \multicolumn{1}{r}{---} & \multicolumn{1}{r}{---}                                      & \multicolumn{1}{r}{$>$3600} \\
     \hline
\end{tabular}
\label{tab:stQP_MILP_Grid_SimpUnif}
\end{table}

Table~\ref{tab:stQP_MILP_Grid_SimpUnif} shows the average objective values obtained by solving standard quadratic programming problems with each method and the average computational time required to solve them.
We note that although we also solved the problems with $n = 5000$, we could not solve some of the ten instances within the maximum execution time regardless of the methods used.

We were able to solve the problems with \textsf{MILP} within the maximum execution time in the case of $n \le 500$, but the computational time exceeded the maximum execution time in the case of $n = 1000$.
Whether we were able to solve the problems with \textsf{Grid} within the maximum execution time depended on the setting of $\epsilon$.
When $\epsilon = 0.1$, we were not able to solve the problems with \textsf{Grid} within the maximum execution time.
On the other hand, when $\epsilon = 1$, we were able to solve the problems with $n = 1000$ in an average time of \qty{8.4}{\second}.
In addition, the objective values obtained by using \textsf{Grid} were much more accurate than the theoretical bounds shown in \eqref{eq:Bd2002_ub_eps}.
The values in the ``Dev. from \textsf{MILP}'' column were much smaller than $\epsilon$ appearing in the respective row.
There is no reason to use \textsf{SUnif} to solve standard quadratic programming problems because the computational time of \textsf{SUnif} was always longer than that of \textsf{MILP}.

Second, to see the effectiveness of the randomized method introduced in Section~\ref{subsec:GridUnif} (denoted \textsf{GUnif}), we compare it with the other randomized method, \textsf{SUnif}, in terms of solution accuracy and computational time.
Recall that in \textsf{SUnif} we sample from the standard simplex $\Delta^{n-1}$, whereas in \textsf{GUnif} we sample points from the regular grid $\Delta_r^{n-1}$ for some positive integer $r$.
For each $n$, we solved only one of the ten instances.
We set $\epsilon$ to $1$.
For each $(n,\epsilon)$ we set $r$ as in the previous experiment and choose the number $M$ of samples to be the smallest integer greater than or equal to $c\abs{\Delta_r^{n-1}}$, where $c \in \{\num{5e-1},\num{1e-1},\num{1e-2}\}$.
We sampled $M$ points $\bm{\delta}_1,\dots,\bm{\delta}_M$ from the standard simplex $\Delta^{n-1}$ when we used \textsf{SUnif}, whereas we sampled from the grid $\Delta_r^{n-1}$ when we used \textsf{GUnif}.
Then we calculated the value $\min_{1\le i\le M}\bm{\delta}_i^\top \bm{Q}\bm{\delta}_i$.
Strictly speaking, our choice of $M$ does not follow the results in Sections~\ref{subsec:SimpUnif} and \ref{subsec:GridUnif}.
Here, we compare sampling from the standard simplex with sampling from the regular grid by matching the number of samples.

Tables~\ref{tab:stQP_GridUnif_SimpUnif_eps1} shows the objective values obtained by solving standard quadratic programming problems with each method and the computational time required to solve them.
Here, we present only the results for the case $\epsilon = 1$; similar results were also obtained for $\epsilon = 0.1$.
We did not display the results of $n = 5,10$ because in these cases the numbers of points were always less than $50$.

We observed that the objective values obtained by using \textsf{GUnif} were always better than those obtained with \textsf{SUnif}.
However, in many cases, the computational time for \textsf{GUnif} was longer than that for \textsf{SUnif}.
The MATLAB Profiler showed that our implementation of \textsf{GUnif} spent most time in the built-in function \texttt{sort}, which we used to implement uniform sampling from the regular grid.
A more efficient implementation of \textsf{GUnif} would shorten its computational time.

\begin{landscape}
\begin{table}[]
\centering
\caption{The results of solving standard quadratic programming problems by using \textsf{GUnif} and \textsf{SUnif}.
The objective values obtained by solving standard quadratic programming problems with each method are listed in the ``Obj. val.'' column and the computational time required to solve them is listed in the ``Time [\unit{\second}]'' column.
The objective value and time for \textsf{GUnif} and \textsf{SUnif} are compared for each row, respectively, and the better results are shown in bold.
The ``\textsf{GUnif}$-$\textsf{MILP}'' column denotes the difference between the objective value obtained by using \textsf{GUnif} and that obtained by using \textsf{MILP} for the same instance.
The ``$M$'' column denotes the number $M$ of points $\bm{\delta}_1,\dots,\bm{\delta}_M \in \Delta^{n-1}$ that are used to calculate an approximate optimal value $\min_{1\le i\le M} \bm{\delta}_i^\top \bm{Q}\bm{\delta}_i$ for a standard quadratic programming problem of the form \eqref{eq:stQP}.}
\begin{tabular}{r
S[table-format=1e+1]
S[table-format=6]
S[table-format=+1.2e+1]
S[table-format=+1.2e+1]
S[table-format=1.2e+1]
S[table-format=1.1e+1]
S[table-format=1.1e+1]}
\hline
     &        &           & \multicolumn{3}{c}{Obj. val.}                                                                                                                   & \multicolumn{2}{c}{Time [\unit{\second}]} \\
\cmidrule(lr){4-6}\cmidrule(lr){7-8}
\multicolumn{1}{l}{$n$}  & \multicolumn{1}{l}{$c$}    & \multicolumn{1}{l}{$M$} & \multicolumn{1}{l}{\textsf{GUnif}} & \multicolumn{1}{l}{\textsf{SUnif}} & \multicolumn{1}{l}{\textsf{GUnif}$-$\textsf{MILP}} & \multicolumn{1}{l}{\textsf{GUnif}} & \multicolumn{1}{l}{\textsf{SUnif}} \\
\hline
50   & 5E-1    & 638       & \bfseries -9.35E-01                 & -9.30E-02                          & 3.86E-02                                                              & 1.3E-02                            & \bfseries 4.7E-03                            \\
     & 1E-1    & 128       & \bfseries -6.62E-01                 & -8.64E-02                          & 3.12E-01                                                              & \bfseries 4.0E-03                   & 5.2E-03                            \\
     & 1E-2   & 13        & \bfseries -6.57E-01                 & -6.72E-02                          & 3.17E-01                                                              & \bfseries 2.3E-03                   & 2.4E-03                            \\
     \hline
100  & 5E-1    & 2525      & \bfseries -9.22E-01                 & -4.79E-02                          & 1.99E-02                                                              & 3.1E-02                            & \bfseries 9.3E-03                            \\
     & 1E-1    & 505       & \bfseries -8.99E-01                 & -3.34E-02                          & 4.31E-02                                                              & 8.8E-03                            & \bfseries 6.2E-03                            \\
     & 1E-2   & 51        & \bfseries -5.70E-01                 & -1.99E-02                          & 3.71E-01                                                              & 3.5E-03                            & \bfseries 2.5E-03                            \\
     \hline
500  & 5E-1    & 62625     & \bfseries -9.95E-01                 & -1.37E-02                          & 1.75E-03                                                              & 1.4E+00                            & \bfseries 5.0E-01                            \\
     & 1E-1    & 12525     & \bfseries -9.47E-01                 & -1.31E-02                          & 4.97E-02                                                              & 3.1E-01                            & \bfseries 9.9E-02                            \\
     & 1E-2   & 1253      & \bfseries -9.48E-01                 & -1.06E-02                          & 4.87E-02                                                              & 5.2E-02                            & \bfseries 1.6E-02                            \\
     \hline
1000 & 5E-1    & 250250    & \bfseries -9.88E-01                 & -7.60E-03                          & 4.83E-03                                                              & 1.3E+01                            & \bfseries 4.3E+00                            \\
     & 1E-1    & 50050     & \bfseries -9.82E-01                 & -5.96E-03                          & 1.10E-02                                                              & 2.7E+00                            & \bfseries 8.6E-01                            \\
     & 1E-2   & 5005      & \bfseries -9.42E-01                 & -5.25E-03                          & 5.10E-02                                                              & 2.7E-01                            & \bfseries 9.1E-02                            \\
\hline
\end{tabular}
\label{tab:stQP_GridUnif_SimpUnif_eps1}
\end{table}
\end{landscape}

\subsection{Solving general copositive programming problems}\label{subsec:NE_COPP}
In this subsection, we compare the performance of methods for solving standard quadratic programming problems when they are incorporated into Algorithm~\ref{alg:proposed} for copositive programming.
The copositive programming problem considered here is of the form \eqref{prob:COPP}.
We let $n \in \{5,10,50,100,500,1000\}$ and set $m = 5$.
For each $(n,m)$, we created ten instances of copositive programming, each generated as follows.
The set $S$ was defined as $\bbR^5$.
Each element of the vector $\bm{c} \in \bbR^5$ was independently sampled from the chi distribution with $1$ degree of freedom.
The $(1,1),\dots,(5,5)$th elements of $\bm{A}_0 \in \calS^n$ were set to $0$.
The other elements of $\bm{A}_0$ were independently sampled from the chi distribution with $1$ degree of freedom, and then $0.01$ was added to each.
For each $i \in [5]$, the $(i,i)$th element of $\bm{A}_i \in \calS^n$ was set to $1$ and the $(1,1),\dots,(i-1,i-1),(i+1,i+1),\dots,(5,5)$th elements were set to $0$.
The other elements of $\bm{A}_i$ were independently sampled from the standard normal distribution.
For each $j \in [5]$, the $(j,j)$th element of the slack matrix $\bm{A}_0 + \sum_{i=1}^5 x_i\bm{A}_i$ is $x_j$, which must be nonnegative due to copositivity.
This implies that every feasible solution $\bm{x} \in \bbR^5$ of the copositive programming problem is entrywise nonnegative.
Since $\bm{c}$ is also entrywise nonnegative, we have $\bm{c}^\top \bm{x} \ge 0$.
In addition, the nonnegativity of $\bm{A}_0$ implies that $\bm{x} = \bm{0}$ is a feasible solution, at which the objective value is $0$.
Therefore, the optimal value of the problem is $0$ and $\bm{x} = \bm{0}$ is an optimal solution.

Regardless of the subproblem solver, the initial point $\bm{x}_1 \in \bbR^5$ of Algorithm~\ref{alg:proposed} was set to the vector with all elements equal to $1$.
At each iteration indexed by $k$, we found $\bm{\delta}_k \in \Delta^{n-1}$ satisfying \eqref{eq:xi_ineq_alphaeps}.
We see that the distance between $\bm{x}_1$ and the set of optimal solutions is less than or equal to $\sqrt{5}$ since $\bm{x} = \bm{0}$ is optimal.
Based on this and \eqref{eq:N_lb}, we stopped the algorithm when the number of iterations was greater than or equal to $5L^2/\epsilon^2$ or the computational time exceeded \qty{3600}{\second}.

Following Section~\ref{subsec:NE_stQP}, we used \textsf{MILP} and \textsf{Grid} to solve a standard quadratic programming problem that appears as a subproblem at each iteration of Algorithm~\ref{alg:proposed}.
When using \textsf{MILP} to solve subproblems, we set $\alpha = 0$ and $\epsilon = 2$, and when using \textsf{Grid}, we set $\alpha = 1$ and $\epsilon = 1$, so that $\epsilon(1+\alpha) = 2$ in both cases.
When using \textsf{Grid} to solve a standard quadratic programming problem of the form \eqref{eq:stQP} at each iteration, we set $r$ to the minimum positive integer satisfying \eqref{eq:Grid_r}.

In preliminary experiments, we observed that for both \textsf{MILP} and \textsf{Grid} the computational time exceeded \qty{3600}{\second} for $n\ge 1000$.
To solve these instances, we utilized \textsf{GUnif}.
We also set $\alpha = 1$ and $\epsilon = 1$.
To solve a standard quadratic programming problem of the form \eqref{eq:stQP} at each iteration, we set $r$ to the minimum positive integer satisfying \eqref{eq:Grid_r} and sampled \num{100000} points from $\Delta_r^{n-1}$.

Existing methods based on a simplicial partition~\cite{BD2009,Zilinskas2011} and cutting-plane techniques~\cite{Bd2022_An,GBT2026} introduced in Section~\ref{sec:intro} can be used to solve copositive programming problems.
These methods seem effective for finding approximate solutions in practice, as evidenced by numerical results reported in previous studies.
However, our method differs from them in that it has a non-asymptotic convergence guarantee, which the others lack.
For this reason, we decided not to compare our method with existing methods numerically.

\begin{table}[tbp]
\centering
\caption{The results of applying Algorithm~\ref{alg:proposed} to copositive programming problems and solving the subproblems using \textsf{MILP}, \textsf{Grid}, and \textsf{GUnif}.
The average numbers of iterations are listed in the ``\#Iter.'' columns, the average objective values obtained using each method are listed in the ``Obj. val.'' column, and the average computational time required is listed in the ``Time [\unit{\second}]'' column.}
\begin{tabular}{
r
l
S[table-number-alignment=right,table-format=1.1e1]
S[table-number-alignment=right,table-format=+1.2e+1]
S[table-number-alignment=right,table-format=1.1e+1]}
\hline
\multicolumn{1}{l}{$n$}  & Method       & \multicolumn{1}{l}{\#Iter.} & \multicolumn{1}{l}{Obj. val.} & \multicolumn{1}{l}{Time [\unit{\second}]} \\
\hline
5                        & \textsf{MILP}   & 6.4E+00                          & -3.02E+00                     & 6.3E-02                          \\
                          & \textsf{Grid}   & 2.5E+01                          & -2.22E+00                     & 2.0E-02                          \\
10                       & \textsf{MILP}   & 1.1E+01                          & -3.48E+00                     & 6.6E-02                          \\
                          & \textsf{Grid}   & 4.1E+01                          & -2.62E+00                     & 1.8E-01                          \\
50                       & \textsf{MILP}   & 1.6E+01                          & -1.51E+00                     & 1.5E+00                          \\
& \textsf{Grid}   &  1               & \multicolumn{1}{r}{---}            & \multicolumn{1}{r}{$>$3600}               \\
100                      & \textsf{MILP}   & 1.9E+01                          & -1.86E+00                     & 9.3E+00                          \\
& \textsf{Grid}   &  1               & \multicolumn{1}{r}{---}            & \multicolumn{1}{r}{$>$3600}               \\
 500                      & \textsf{MILP}   & 2.8E+01                          & -1.08E+00                     & 9.2E+02                          \\
& \textsf{Grid}   &  1               & \multicolumn{1}{r}{---}            & \multicolumn{1}{r}{$>$3600}               \\
\multicolumn{1}{l}{1000} & \textsf{GUnif} & 1.2E+02                          & -1.06E+02                     & 6.7E+02                          \\
\hline
\end{tabular}
\label{tab:NE_COPP}
\end{table}

Table~\ref{tab:NE_COPP} shows the average numbers of iterations, the average objective values obtained using each method, and the average computational time required to solve copositive programming problems.
Here, we present only the results for the case $\epsilon(1+\alpha) = 2$; similar results were also obtained for $\epsilon(1+\alpha) = 0.2$.
Except for $n = 5$, the computational time for \textsf{MILP} was shorter than that for \textsf{Grid}.
We observed that when using \textsf{Grid}, solving standard quadratic programming problems was time-consuming; indeed, even the first iteration was not completed for instances that exceeded \qty{3600}{\second}.
This result contrasts with that in Table~\ref{tab:stQP_MILP_Grid_SimpUnif}, where the computational time for \textsf{Grid} was shorter than that for \textsf{MILP}.
This indicates that whether \textsf{Grid} or \textsf{MILP} is better depends on the instance.
Further investigation is needed to determine under what conditions the computational time for \textsf{Grid} is shorter than that for \textsf{MILP}.

The number of samples in \textsf{GUnif} is not consistent with Proposition~\ref{prop:GridUnif} and is much smaller than the number of points in the grid.\footnote{If we set $r$ to the minimum positive integer satisfying \eqref{eq:GUnif_r}, the number of points in $\Delta_r^{n-1}$ sometimes exceeded \texttt{realmax} in MATLAB.}
The average objective values obtained using \textsf{GUnif} were rather coarse.
However, when \textsf{MILP} and \textsf{Grid} take too long to solve problems, employing \textsf{GUnif} to obtain approximate solutions may be a viable alternative.

\subsection{Testing for non-complete positivity of exceptional doubly nonnegative matrices}\label{subsec:NE_CP}
A real symmetric matrix is said to be \emph{doubly nonnegative} if it is positive semidefinite and entrywise nonnegative.
It is well known that every completely positive matrix is doubly nonnegative, and the converse holds if and only if the order is less than or equal to $4$; see \cite[page~101]{Hall1958} and \cite{MM1962}.
We can easily check whether a given matrix is doubly nonnegative, so we are interested in deciding whether a given doubly nonnegative matrix is completely positive.
In Theorem~\ref{thm:suff_cond_not_CP}, we provided a sufficient condition for a given matrix not to be completely positive.
In this subsection, we demonstrate that this theorem is effective for doubly nonnegative matrices that are not completely positive.
For convenience, following the terminology of \cite{SZ2025}, we call a doubly nonnegative matrix that is not completely positive \emph{exceptional}.\footnote{Burer, Anstreicher, and D\"{u}r~\cite{BAD2009} refer to an exceptional doubly nonnegative matrix as a \emph{bad} matrix.}

From the numerical results in Sections~\ref{subsec:NE_stQP} and \ref{subsec:NE_COPP}, we observe that, even when using \textsf{MILP}, the standard quadratic programming problem arising at each iteration of Algorithm~\ref{alg:proposed} can be solved within a reasonable time, provided that the problem size is not large.
For this reason, in the subsequent experiments we used only \textsf{MILP} to solve standard quadratic programming problems, and thus set the violation measure $\alpha$ to $0$.
We set the error tolerance $\epsilon$ and the number of iterations as described in Remark~\ref{rem:set_eps}.
For the constant $t$, specified separately for the two experiments below, Algorithm~\ref{alg:proposed} was terminated when the number of iterations reached $1/\epsilon^2 = t^2 n^2$ or when the computational time exceeded \qty{3600}{\second}.
As mentioned in Remark~\ref{rem:certify_nonCP}, for an input matrix $\bm{C} \in \calS^n$, we also stopped Algorithm~\ref{alg:proposed} as soon as we had a matrix $\bm{X}_k$ satisfying $g(\bm{X}_k;\bm{\delta}_k) \le \epsilon$ and $\langle \bm{C},\bm{X}_k\rangle < -n\epsilon$.

\begin{table}[tbp]
\centering
\caption{The results of detecting non-complete positivity for the exceptional doubly nonnegative matrices presented in \cite[Appendix~B]{Bd2022_Simulated}. In the ``Non-CP'' column, ``Y'' denotes successful detection, whereas ``N'' indicates failure to detect non-complete positivity.}
\begin{tabular}{
l
l
S[table-number-alignment=right,table-format=1.1e1]
}
\hline
Name                                         & Non-CP & \multicolumn{1}{l}{Time [\unit{\second}]} \\
\hline
\texttt{extremal\_rand\_1}  & Y      & 2.3E+02      \\
\texttt{extremal\_rand\_2}  & Y      & 4.7E+01     \\
\texttt{extremal\_rand\_3}  & Y      & 1.6E+01      \\
\texttt{extremal\_rand\_4}  & Y      & 1.7E+02      \\
\texttt{extremal\_rand\_5}  & N      & 4.4E+02      \\
\texttt{extremal\_rand\_6}  & Y      & 8.4E+00      \\
\texttt{extremal\_rand\_7}  & Y      & 2.2E+01      \\
\texttt{extremal\_rand\_8}  & Y      & 4.3E+00      \\
\texttt{extremal\_rand\_9}  & Y      & 1.2E+01      \\
\texttt{extremal\_rand\_10} & Y      & 1.6E+01 \\
\hline
\end{tabular}
\label{tab:Bd2022}
\end{table}

First, we tried to detect non-complete positivity for the ten $6\times 6$ exceptional doubly nonnegative matrices presented in \cite[Appendix~B]{Bd2022_Simulated}.
We normalized each matrix and used the normalized matrix as the input matrix $\bm{C}$ in \eqref{prob:test_CP}.
In this experiment, we set the constant $t$ that appears in \eqref{eq:num_iter_test_CP_c} to $55$.
Table~\ref{tab:Bd2022} shows the results of detecting non-complete positivity for the ten matrices.
Except for the matrix \texttt{extremal\_rand\_5}, we successfully detected non-complete positivity for all the other matrices.

\begin{table}[tbp]
\centering
\caption{The results of detecting non-complete positivity for the exceptional doubly nonnegative matrices presented in \cite{SZ2025}. In the ``Non-CP'' column, ``Y'' denotes successful detection, whereas ``N'' indicates failure to detect non-complete positivity.}
\begin{tabular}{
l
l
l
S[table-number-alignment=right,table-format=1.1e1]
}
\hline
$n$ & $\tau$ & Non-CP & \multicolumn{1}{l}{Time [\unit{\second}]} \\
\hline
10  & 0.24       & Y      & 7.8E+01      \\
20  & 0.37       & Y      & 3.1E+02      \\
30  & 0.39       & Y      & 2.7E+03      \\
40  & 0.40       & N      & \multicolumn{1}{r}{$>$3600}      \\
\hline
\end{tabular}
\label{tab:SZ2025}
\end{table}

Next, we tried to detect non-complete positivity for exceptional doubly nonnegative matrices provided by \v{S}trekelj and Zalar~\cite{SZ2025}.
For a positive integer $m$ and $\bm{a} \in \bbR^m$, let
$\bm{C}(\bm{a}) \in \calS^n$ be the matrix whose $(i,j)$th element is
\begin{equation}
c_ic_j \int_0^1 \left(1 + 2\sum_{k=1}^m a_k\cos(2k\pi x)\right) \cos(2(i-1)\pi x)\cos(2(j-1)\pi x) dx, \label{eq:Ca_ij}
\end{equation}
where $c_i = 1$ if $i = 1$ and $c_i = \sqrt{2}$ if $i \ge 2$.
Note that we can calculate \eqref{eq:Ca_ij} analytically; see \cite[Equation~(2.3)]{SZ2025}.
Then, inspired by the method for constructing exceptional doubly nonnegative matrices presented in \cite[Equation~(1.12)]{SZ2025}, we solved the following semidefinite feasibility problem:
\begin{equation}
\begin{aligned}
&\left\langle \bm{C}_{1:5,1:5}(\bm{a}), \begin{pmatrix}
1 & -1 & 1 &  1 & -1\\
-1 & 1 & -1 & 1 & 1\\
1 & -1 & 1 & -1 & 1\\
1 & 1 & -1 & 1 & -1\\
-1 & 1 & 1 & -1 & 1
\end{pmatrix}\right\rangle = -\tau,\\
&\bm{C}(\bm{a}) \text{ is positive semidefinite},\\
&\bm{a}  \in \bbR_+^m,
\end{aligned}\label{prob:SZ2025}
\end{equation}
where $\tau$ is a given positive value and $\bm{C}_{1:5,1:5}(\bm{a})$ denotes the principal submatrix obtained by extracting the first through fifth rows and columns of $\bm{C}(\bm{a})$.
If $\bm{a}$ is a feasible solution of this problem, then $\bm{C}(\bm{a})$ is an exceptional doubly nonnegative matrix.
For each $n \in \{10,20,30,40\}$, we set $m = n + 1$ and took $\tau$ as shown in Table~\ref{tab:SZ2025} such that $\tau$ was as large as possible and Problem~\eqref{prob:SZ2025} was feasible.
After obtaining $\bm{a}$, we normalized the matrix $\bm{C}(\bm{a})$ and used the normalized matrix as the input matrix $\bm{C}$ in \eqref{prob:test_CP}.
We solved Problem~\eqref{prob:SZ2025} using the modeling language YALMIP~\cite{Lofberg2004} (version 20250626) and the MOSEK solver~\cite{MOSEK} (version 11.0.24).
In this experiment, we set the constant $t$ that appears in \eqref{eq:num_iter_test_CP_c} to $15$.
Table~\ref{tab:SZ2025} shows the results of detecting non-complete positivity for the matrices.
Despite the inability to ascertain the non-complete positivity of the matrix with $n = 40$ due to the time limit, we were able to detect it for the matrices with $n = 10,20,30$.

However, the choice of the constant $t$ in \eqref{eq:num_iter_test_CP_c} is crucial, and in the two experiments we selected $t$ by trial and error.
The smaller $t$ is, the fewer the iterations, but at the same time the harder it is to detect non-complete positivity for a matrix.
In the first experiment, setting $t=50$ failed to detect non-complete positivity for the matrix \texttt{extremal\_rand\_1}.
In the second experiment, setting $t=5$ failed to detect non-complete positivity for any of the exceptional doubly nonnegative matrices.
We need to consider how to set $t$ to balance the number of iterations and the performance of detecting non-complete positivity.

\section{Conclusion}\label{sec:conclusion}
In this paper, based on the subgradient algorithm in \cite[Equation~(3.2.24)]{Nesterov2018}, which is applicable to convex programming problems with a single nonsmooth functional constraint, we proposed an algorithm with a non-asymptotic convergence guarantee to solve copositive programming problems.
In contrast to the algorithm in \cite[Equation~(3.2.24)]{Nesterov2018}, the proposed algorithm allows us to solve the subproblem, which is a standard quadratic programming problem, inexactly at each iteration.
We discussed how to solve the standard quadratic programming problem exactly and inexactly.

Through the numerical experiments, we identified trends indicating which method is preferable under which circumstances for solving standard quadratic programming problems.
For small-scale problems, the exact method for standard quadratic programming via mixed-integer linear programming is acceptable.
For medium-scale problems, the exact method may be effective in some cases, whereas the inexact deterministic method obtained by discretizing the standard simplex using a regular grid may be effective in others.
Note, however, that the inexact method has the advantage that it does not require external solvers.
For large-scale problems where the above two methods take too long to solve, uniform sampling from the regular grid might be another option to solve standard quadratic programming problems inexactly, although its solution accuracy is coarse.

Moreover, we applied the proposed algorithm to the problem of testing complete positivity of a matrix.
By using its convergence result, we provided a sufficient condition for certifying that the matrix is not completely positive.
In the numerical experiments, we were able to detect non-complete positivity in various doubly nonnegative matrices that are not completely positive.

\vspace{0.5cm}
\noindent
{\bf Acknowledgments}
The first author is supported by JSPS Grant-in-Aid
for Research Activity Start-up JP25K23344.
The second and third authors are supported by JSPS Grant-in-Aid
for Scientific Research(B) JP23H03351.
The third author is also supported by JST CREST Grant Number JPMJCR24Q2.


\bibliographystyle{plainurl} 
\bibliography{2025_2_ref} %

@article{AM2019,
author = {Ahmadi, A.A. and Majumdar, A.},
title = {{DSOS} and {SDSOS} optimization: more tractable alternatives to sum of squares and semidefinite optimization},
journal = {SIAM J. Appl. Algebra Geom.},
fjournal = {SIAM Journal on Applied Algebra and Geometry},
volume = {3},
number = {2},
pages = {193--230},
year = {2019},
doi = {10.1137/18M118935X},
url = {https://doi.org/10.1137/18M118935X}
}

@article{BDd+2000,
author = {Bomze, I.M. and D\"{u}r, M. and {de} Klerk, E. and Roos, C. and Quist, A.J. and Terlaky, T.},
title = {On copositive programming and standard quadratic optimization problems},
journal = {J. Glob. Optim.},
fjournal = {Journal of Global Optimization},
volume = {18},
number = {4},
pages = {301--320},
year = {2000},
doi = {10.1023/A:1026583532263},
url = {https://doi.org/10.1023/A:1026583532263}
}

@article{Bd2002,
author = {Bomze, I.M. and {de} Klerk, E.},
title = {Solving standard quadratic optimization problems via linear, semidefinite and copositive programming},
journal = {J. Glob. Optim.},
fjournal = {Journal of Global Optimization},
volume = {24},
number = {2},
pages = {163--185},
year = {2002},
doi = {10.1023/A:1020209017701},
url = {https://doi.org/10.1023/A:1020209017701}
}

@article{BD2009,
author = {Bundfuss, S. and D\"{u}r, M.},
title = {An adaptive linear approximation algorithm for copositive programs},
journal = {SIAM J. Optim.},
fjournal = {SIAM Journal on Optimization},
volume = {20},
number = {1},
pages = {30--53},
year = {2009},
doi = {10.1137/070711815},
url = {https://doi.org/10.1137/070711815}
}

@article{Burer2009,
author = {Burer, S.},
title = {On the copositive representation of binary and continuous nonconvex quadratic programs},
journal = {Math. Program.},
fjournal = {Mathematical Programming},
volume = {120},
number = {2},
pages = {479--495},
year = {2009},
doi = {10.1007/s10107-008-0223-z},
url = {https://doi.org/10.1007/s10107-008-0223-z}
}

@article{dP2002,
author = {{de} Klerk, E. and Pasechnik, D.V.},
title = {Approximation of the stability number of a graph via copositive programming},
journal = {SIAM J. Optim.},
fjournal = {SIAM Journal on Optimization},
volume = {12},
number = {4},
pages = {875--892},
year = {2002},
doi = {10.1137/S1052623401383248},
url = {https://doi.org/10.1137/S1052623401383248}
}

@article{DG2014,
author = {Dickinson, P.J.C. and Gijben, L.},
title = {On the computational complexity of membership problems for the completely positive cone and its dual},
journal = {Comput. Optim. Appl.},
fjournal = {Computational Optimization and Applications},
volume = {57},
number = {2},
pages = {403--415},
year = {2014},
doi = {10.1007/s10589-013-9594-z},
url = {https://doi.org/10.1007/s10589-013-9594-z}
}

@article{Lasserre2014,
author = {Lasserre, J.B.},
title = {New approximations for the cone of copositive matrices and its dual},
journal = {Math. Program.},
fjournal = {Mathematical Programming},
volume = {144},
number = {1--2},
pages = {265--276},
year = {2014},
doi = {10.1007/s10107-013-0632-5},
url = {https://doi.org/10.1007/s10107-013-0632-5}
}

@inproceedings{Lofberg2004,
author={L\"{o}fberg, J.},
title={{YALMIP}: a toolbox for modeling and optimization in {MATLAB}},
booktitle={Proceedings of the 2004 IEEE International Symposium on Computer Aided Control Systems Design},
pages={284--289},
year={2004},
doi = {10.1109/CACSD.2004.1393890},
memo = {url = {https://doi.org/10.1109/CACSD.2004.1393890}}
}

@manual{MOSEK,
author = {{MOSEK ApS}},
title = {MOSEK Optimization Toolbox for MATLAB},
edition = {11.0.29},
year = {2025},
url = {https://docs.mosek.com/11.0/toolbox/index.html}
}

@article{PVZ2007,
author = {Pe\~{n}a, J. and Vera, J. and Zuluaga, L.F.},
title = {Computing the stability number of a graph via linear and semidefinite programming},
journal = {SIAM J. Optim.},
fjournal = {SIAM Journal on Optimization},
volume = {18},
number = {1},
pages = {87--105},
year = {2007},
doi = {10.1137/05064401X},
url = {https://doi.org/10.1137/05064401X}
}

@article{SBD2012,
author = {Sponsel, J. and Bundfuss, S. and D\"{u}r, M.},
title = {An improved algorithm to test copositivity},
journal = {J. Glob. Optim.},
fjournal = {Journal of Global Optimization},
volume = {52},
number = {3},
pages = {537--551},
year = {2012},
doi = {10.1007/s10898-011-9766-2},
url = {https://doi.org/10.1007/s10898-011-9766-2}
}

@article{Yildirim2012,
author = {Y{\i}ld{\i}r{\i}m, E.A.},
title = {On the accuracy of uniform polyhedral approximations of the copositive cone},
journal = {Optim. Methods Softw.},
fjournal = {Optimization Methods and Software},
volume = {27},
number = {1},
pages = {155--173},
year = {2012},
doi = {10.1080/10556788.2010.540014},
url = {https://doi.org/10.1080/10556788.2010.540014}
}

@manual{Gurobi,
author = {{Gurobi Optimization}},
title = {Gurobi Optimizer Reference Manual},
edition = {12.0},
year = {2025},
url = {https://docs.gurobi.com/projects/optimizer/en/current/}
}

@article{NN2024_Approximation,
author = {Nishijima, M. and Nakata, K.},
title = {Approximation hierarchies for copositive cone over symmetric cone and their comparison},
journal = {J. Glob. Optim.},
fjournal = {Journal of Global Optimization},
volume = {88},
number = {4},
pages = {831--870},
year = {2024},
doi = {10.1007/s10898-023-01319-3},
url = {https://doi.org/10.1007/s10898-023-01319-3}
}

@article{ADS2013,
author = {Ahmed, F. and D\"{u}r, M. and Still, G.},
title = {Copositive programming via semi-infinite optimization},
journal = {J. Optim. Theory Appl.},
fjournal = {Journal of Optimization Theory and Applications },
volume = {159},
number = {2},
pages = {322--340},
year = {2013},
doi = {10.1007/s10957-013-0344-2},
url = {https://doi.org/10.1007/s10957-013-0344-2}
}

@article{KT2022_On_equivalent,
author = {Kostyukova, O.I. and Tchemisova, T.V.},
title = {On equivalent representations and properties of faces of the cone of copositive matrices},
journal = {Optimization},
fjournal = {Optimization},
volume = {71},
number = {11},
pages = {3211--3239},
year = {2022},
doi = {10.1080/02331934.2022.2027939},
url = {https://doi.org/10.1080/02331934.2022.2027939}
}

@article{Orlitzky2021,
author = {Orlitzky, M.},
title = {Gaddum's test for symmetric cones},
journal = {J. Glob. Optim.},
fjournal = {Journal of Global Optimization},
volume = {79},
number = {4},
pages = {927--940},
year = {2021},
doi = {10.1007/s10898-020-00960-6},
url = {https://doi.org/10.1007/s10898-020-00960-6}
}

@article{MK1987,
author = {Murty, K.G. and Kabadi, S.N.},
title = {Some {NP}-complete problems in quadratic and nonlinear programming},
journal = {Math. Program.},
fjournal = {Mathematical Programming},
volume = {39},
number = {2},
pages = {117--129},
year = {1987},
doi = {10.1007/BF02592948},
url = {https://doi.org/10.1007/BF02592948}
}

@article{BG2023,
author = {Bomze, I.M. and Gabl, M.},
title = {Optimization under uncertainty and risk: quadratic and copositive approaches},
journal = {Eur. J. Oper. Res.},
fjournal = {European Journal of Operational Research},
volume = {310},
number = {2},
pages = {449--476},
year = {2023},
doi = {10.1016/j.ejor.2022.11.020},
url = {https://doi.org/10.1016/j.ejor.2022.11.020}
}

@article{NN2024_Generalizations,
author = {Nishijima, M. and Nakata, K.},
title = {Generalizations of doubly nonnegative cones and their comparison},
journal = {J. Oper. Res. Soc. Jpn.},
fjournal = {Journal of the Operations Research Society of Japan},
volume = {67},
number = {3},
pages = {84--109},
year = {2024},
doi = {10.15807/jorsj.67.84},
url = {https://doi.org/10.15807/jorsj.67.84}
}

@article{NL2025,
author = {Nishijima, M. and Louren\c{c}o, B.F.},
title = {Non-facial exposedness of copositive cones over symmetric cones},
journal = {J. Math. Anal. Appl.},
fjournal = {Journal of Mathematical Analysis and Applications},
volume = {545},
number = {2},
pages = {129166},
year = {2025},
doi = {10.1016/j.jmaa.2024.129166},
url = {https://doi.org/10.1016/j.jmaa.2024.129166}
}

@incollection{Hall1958,
author={Hall, Jr., M.},
title={A survey of combinatorial analysis},
editor={Kaplansky, I. and Hewitt, E. and Hall, Jr., M. and Fortet, R.},
booktitle={Some Aspects of Analysis and Probability},
pages={35--104},
publisher={John Wiley \& Sons},
address={New York, NY},
year={1958}
}

@article{HN1963,
author = {Hall, Jr., M. and Newman, M.},
title = {Copositive and completely positive quadratic forms},
journal = {Math. Proc. Camb. Philos. Soc.},
fjournal = {Mathematical Proceedings of the Cambridge Philosophical Society},
volume = {59},
number = {2},
pages = {329--339},
year = {1963},
doi = {10.1017/S0305004100036951},
url = {https://doi.org/10.1017/S0305004100036951}
}

@article{HS2010,
author = {Hiriart-Urruty, J.-B. and Seeger, A.},
title = {A variational approach to copositive matrices},
journal = {SIAM Rev.},
fjournal = {SIAM Review},
volume = {52},
number = {4},
pages = {593--629},
year = {2010},
doi = {10.1137/090750391},
url = {https://doi.org/10.1137/090750391}
}

@inproceedings{Parrilo2000_Semidefinite,
author={Parrilo, P.A.},
title={Semidefinite programming based tests for matrix copositivity},
booktitle={Proceedings of the 39th IEEE Conference on Decision and Control},
pages={4624--4629},
year={2000},
doi = {10.1109/CDC.2001.914655},
memo = {url = {https://doi.org/10.1109/CDC.2001.914655}}
}

@article{GY2021,
author = {Gondzio, J. and Y{\i}ld{\i}r{\i}m, E.A.},
title = {Global solutions of nonconvex standard quadratic programs via mixed integer linear programming reformulations},
journal = {J. Glob. Optim.},
fjournal = {Journal of Global Optimization},
volume = {81},
number = {2},
pages = {293--321},
year = {2021},
doi = {10.1007/s10898-021-01017-y},
url = {https://doi.org/10.1007/s10898-021-01017-y}
}

@techreport{Nesterov2003,
author={Nesterov, Y.},
title={Random walk in a simplex and quadratic optimization
over convex polytopes},
type={Technical report},
institution={CORE, Catholic University of Louvain, Louvain-la-Neuve, Belgium},
year={2003},
}

@article{BLT2008,
author = {Bomze, I.M. and Locatelli, M. and Tardella, F.},
title = {New and old bounds for standard quadratic optimization: dominance, equivalence and incomparability},
journal = {Math. Program.},
fjournal = {Mathematical Programming},
volume = {115},
number = {1},
pages = {31--64},
year = {2008},
doi = {10.1007/s10107-007-0138-0},
url = {https://doi.org/10.1007/s10107-007-0138-0}
}

@article{WHZ2020,
author = {Wei, B. and Haskell, W.B. and Zhao, S.},
title = {The {C}o{M}irror algorithm with random constraint sampling for convex semi-infinite programming},
journal = {Ann. Oper. Res.},
fjournal = {Annals of Operations Research},
volume = {295},
number = {2},
pages = {809--841},
year = {2020},
doi = {10.1007/s10479-020-03766-7},
url = {https://doi.org/10.1007/s10479-020-03766-7}
}

@book{Nesterov2018,
author={Nesterov, Y.},
title={Lectures on Convex Optimization},
edition = {Second},
publisher={Springer},
address = {Cham, Switzerland},
year = {2018},
doi = {10.1007/978-3-319-91578-4},
memo = {url = {https://doi.org/10.1007/978-3-319-91578-4}}
}

@book{Nesterov2004,
author={Nesterov, Y.},
title={Introductory Lectures on Convex Optimization: A Basic Course},
edition = {First},
publisher={Springer},
address = {New York, NY},
year={2004},
doi = {10.1007/978-1-4419-8853-9},
memo = {url = {https://doi.org/10.1007/978-1-4419-8853-9}}
}

@article{BBG+2010,
author = {Beck, A. and Ben-Tal, A. and Guttmann-Beck, N. and Tetruashvili, L.},
title = {The {C}o{M}irror algorithm for solving nonsmooth constrained convex problems},
journal = {Oper. Res. Lett.},
fjournal = {Operations Research Letters},
volume = {38},
number = {6},
pages = {493--498},
year = {2010},
doi = {10.1016/j.orl.2010.08.005},
url = {https://doi.org/10.1016/j.orl.2010.08.005}
}

@article{GBT2026,
author = {Guo, C. and Bodur, M. and Taylor, J.A.},
title = {Copositive Duality for Discrete Energy Markets},
journal = {Manag. Sci.},
fjournal = {Management Science},
volume = {72},
number = {3},
pages = {2022--2040},
year = {2026},
doi = {10.1287/mnsc.2023.00906},
url = {https://doi.org/10.1287/mnsc.2023.00906}
}

@article{Bd2022_An,
author = {Badenbroek, R. and {de} Klerk, E.},
title = {An analytic center cutting plane method to determine complete positivity of a matrix},
journal = {INFORMS J. Comput.},
fjournal = {INFORMS Journal on Computing},
volume = {34},
number = {2},
pages = {1115--1125},
year = {2022},
doi = {10.1287/ijoc.2021.1108},
url = {https://doi.org/10.1287/ijoc.2021.1108}
}

@article{LV2022,
author = {Laurent, M. and Vargas, L.F.},
title = {On the exactness of sum-of-squares approximations for the cone of $5\times 5$ copositive matrices},
journal = {Linear Algebra Appl.},
fjournal = {Linear Algebra and its Applications},
volume = {651},
number = {},
pages = {26--50},
year = {2022},
doi = {10.1016/j.laa.2022.06.015},
url = {https://doi.org/10.1016/j.laa.2022.06.015}
}

@article{Xu2001,
author = {Xu, C.},
title = {Completely positive matrices of order five},
journal = {Acta Math. Appl. Sin.},
fjournal = {Acta Mathematicae Applicatae Sinica},
volume = {17},
number = {4},
pages = {550--562},
year = {2001},
doi = {10.1007/BF02669709},
url = {https://doi.org/10.1007/BF02669709}
}

@article{BR2006,
author = {Berman, A. and Rothblum, U.G.},
title = {A note on the computation of the {CP}-rank},
journal = {Linear Algebra Appl.},
fjournal = {Linear Algebra and its Applications},
volume = {419},
number = {1},
pages = {1--7},
year = {2006},
doi = {10.1016/j.laa.2006.04.001},
url = {https://doi.org/10.1016/j.laa.2006.04.001}
}

@article{JS2009,
author = {Jarre, F. and Schmallowsky, K.},
title = {On the computation of {$C^*$} certificates},
journal = {J. Glob. Optim.},
fjournal = {Journal of Global Optimization},
volume = {45},
number = {2},
pages = {281--296},
year = {2009},
doi = {10.1007/s10898-008-9374-y},
url = {https://doi.org/10.1007/s10898-008-9374-y}
}

@article{DD2012,
author = {Dickinson, P.J.C. and D\"{u}r, M.},
title = {Linear-time complete positivity detection and decomposition of sparse matrices},
journal = {SIAM J. Matrix Anal. Appl.},
fjournal = {SIAM Journal on Matrix Analysis and Applications},
volume = {33},
number = {3},
pages = {701--720},
year = {2012},
doi = {10.1137/110848177},
url = {https://doi.org/10.1137/110848177}
}

@article{Nie2014,
author = {Nie, J.},
title = {The $\mathcal{A}$-truncated {$K$}-moment problem},
journal = {Found. Comput. Math.},
fjournal = {Foundations of Computational Mathematics},
volume = {14},
number = {6},
pages = {1243--1276},
year = {2014},
doi = {10.1007/s10208-014-9225-9},
url = {https://doi.org/10.1007/s10208-014-9225-9}
}

@article{BDS2015,
author = {Berman, A. and D\"{u}r, M. and Shaked-Monderer, N.},
title = {Open problems in the theory of completely positive and copositive matrices},
journal = {Electron. J. Linear Algebra},
fjournal = {Electronic Journal of Linear Algebra},
volume = {29},
number = {},
pages = {46--58},
year = {2015},
doi = {10.13001/1081-3810.2943},
url = {https://doi.org/10.13001/1081-3810.2943}
}

@article{KKT2020,
author = {Kim, S. and Kojima, M. and Toh, K.-C.},
title = {Doubly nonnegative relaxations are equivalent to completely positive reformulations of quadratic optimization problems with block-clique graph structures},
journal = {J. Glob. Optim.},
fjournal = {Journal of Global Optimization},
volume = {77},
number = {3},
pages = {513--541},
year = {2020},
doi = {10.1007/s10898-020-00879-y},
url = {https://doi.org/10.1007/s10898-020-00879-y}
}

@article{GY2022,
author = {G\"{o}kmen, Y.G. and Y{\i}ld{\i}r{\i}m, E.A.},
title = {On standard quadratic programs with exact and inexact doubly nonnegative relaxations},
journal = {Math. Program.},
fjournal = {Mathematical Programming},
volume = {193},
number = {1},
pages = {365--403},
year = {2022},
doi = {10.1007/s10107-020-01611-0},
url = {https://doi.org/10.1007/s10107-020-01611-0}
}

@article{GL2018,
author = {Goberna, M.A. and L\'{o}pez, M.A.},
title = {Recent contributions to linear semi-infinite optimization: an update},
journal = {Ann. Oper. Res.},
fjournal = {Annals of Operations Research},
volume = {271},
number = {1},
pages = {237--278},
year = {2018},
doi = {10.1007/s10479-018-2987-8},
url = {https://doi.org/10.1007/s10479-018-2987-8}
}

@article{Zilinskas2011,
author = {\v{Z}ilinskas, J.},
title = {Copositive programming by simplicial partition},
journal = {Informatica},
fjournal = {Informatica},
volume = {22},
number = {4},
pages = {601--614},
year = {2011},
doi = {10.15388/Informatica.2011.345},
url = {https://doi.org/10.15388/Informatica.2011.345}
}

@article{MM1962,
author={Maxfield, J.E. and Minc, H.},
title={On the Matrix Equation {$X'X = A$}},
journal={Proc. Edinb. Math. Soc.},
fjournal={Proceedings of the Edinburgh Mathematical Society},
volume={13},
number = {2},
pages={125--129},
year={1962},
doi={10.1017/S0013091500014681},
url = {https://doi.org/10.1017/S0013091500014681},
}

@article{SZ2025,
author = {\v{S}trekelj, T. and Zalar, A.},
title = {Construction of exceptional copositive matrices},
journal = {Linear Algebra Appl.},
fjournal = {Linear Algebra and its Applications},
volume={727},
pages={368--384},
year = {2025},
doi = {10.1016/j.laa.2025.08.010},
url = {https://doi.org/10.1016/j.laa.2025.08.010}
}

@article{Bd2022_Simulated,
author = {Badenbroek, R. and {de} Klerk, E.},
title = {Simulated annealing for convex optimization: rigorous complexity analysis and practical perspectives},
journal = {J. Optim. Theory Appl.},
fjournal = {Journal of Optimization Theory and Applications},
volume = {194},
number = {2},
pages = {465--491},
year = {2022},
doi = {10.1007/s10957-022-02034-x},
url = {https://doi.org/10.1007/s10957-022-02034-x}
}

@article{BAD2009,
author = {Burer, S. and Anstreicher, K.M. and D\"{u}r, M. },
title = {The difference between $5 \times 5$ doubly nonnegative and completely positive matrices},
journal = {Linear Algebra Appl.},
fjournal = {Linear Algebra and its Applications},
volume = {431},
number = {9},
pages = {1539--1552},
year = {2009},
doi = {10.1016/j.laa.2009.05.021},
url = {https://doi.org/10.1016/j.laa.2009.05.021}
}

@article{GRV20XX,
author = {Goberna, M.A. and Ridolfi, A.B. and {Vera de} Serio, V.N.},
title = {New applications of linear semi-infinite optimization theory in copositive optimization},
journal = {Optimization},
fjournal = {Optimization},
volume = {},
number = {},
pages = {},
year = {to appear},
doi = {10.1080/02331934.2024.2411165},
url = {https://doi.org/10.1080/02331934.2024.2411165}
}

@article{KT2020,
author = {Kostyukova, O.I. and Tchemisova, T.V.},
title = {Optimality conditions for linear copositive programming problems with isolated immobile indices},
journal = {Optimization},
fjournal = {Optimization},
volume = {69},
number = {1},
pages = {145--164},
year = {2020},
doi = {10.1080/02331934.2018.1539482},
url = {https://doi.org/10.1080/02331934.2018.1539482}
}

@article{KTD2020,
author = {Kostyukova, O.I. and Tchemisova, T.V. and Dudina, O.S.},
title = {Immobile indices and {CQ}-free optimality criteria for linear copositive programming problems},
journal = {Set-Valued Var. Anal.},
fjournal = {Set-Valued and Variational Analysis},
volume = {28},
number = {1},
pages = {89--107},
year = {2020},
doi = {10.1007/s11228-019-00527-y},
url = {https://doi.org/10.1007/s11228-019-00527-y}
}

@article{KT2022_On_strong,
author = {Kostyukova, O.I. and Tchemisova, T.V.},
title = {On strong duality in linear copositive programming},
journal = {J. Glob. Optim.},
fjournal = {Journal of Global Optimization},
volume = {83},
number = {3},
pages = {457--480},
year = {2022},
doi = {10.1007/s10898-021-00995-3},
url = {https://doi.org/10.1007/s10898-021-00995-3}
}

@book{Jerrum2003,
author={Jerrum, M.},
title={Counting, Sampling and Integrating: Algorithms and Complexity},
edition = {},
publisher={Birkh\"{a}user Verlag},
address = {Basel, Switzerland},
year = {2003},
doi = {10.1007/978-3-0348-8005-3},
memo = {url = {https://doi.org/10.1007/978-3-0348-8005-3}}
}

@article{FCC2020,
author = {Flores-Baz\'{a}n, F. and C\'{a}rcamo, G. and Caro, S.},
title = {Extensions of the standard quadratic optimization problem: strong duality, optimality, hidden convexity and {S}-lemma},
journal = {Appl. Math. Optim.},
fjournal = {Applied Mathematics and Optimization},
volume = {81},
number = {2},
pages = {383--408},
year = {2020},
doi = {10.1007/s00245-018-9502-0},
url = {https://doi.org/10.1007/s00245-018-9502-0}
}

@article{NL20XX,
author = {Nishijima, M. and Louren\c{c}o, B.F.},
title = {Facial structure of copositive and completely positive cones over a second-order cone},
journal = {Linear Algebra Appl.},
fjournal = {Linear Algebra and its Applications},
volume = {},
number = {},
pages = {},
year = {to appear},
doi = {10.1016/j.laa.2026.04.019},
url = {https://doi.org/10.1016/j.laa.2026.04.019}
}

@article{Anstreicher2021,
author = {Anstreicher, K.M.},
title = {Testing copositivity via mixed--integer linear programming},
journal = {Linear Algebra Appl.},
fjournal = {Linear Algebra and its Applications},
volume = {609},
number = {},
pages = {218--230},
year = {2021},
doi = {10.1016/j.laa.2020.09.002},
url = {https://doi.org/10.1016/j.laa.2020.09.002}
}

@article{Peng2022,
author = {Peng, B.},
title = {Performance comparison of two recently proposed copositivity tests},
journal = {EURO J. Comput. Optim.},
fjournal = {EURO Journal on Computational Optimization},
volume = {10},
number = {},
pages = {100037},
year = {2022},
doi = {10.1016/j.ejco.2022.100037},
url = {https://doi.org/10.1016/j.ejco.2022.100037}
}

@book{GLS1993,
author={Gr\"{o}tschel, M. and Lov\'{a}sz, L. and Schrijver, A.},
title={Geometric Algorithms and Combinatorial Optimization},
edition = {2},
publisher = {Springer},
address = {Berlin, Heidelberg, Germany},
year = {1993},
doi = {10.1007/978-3-642-78240-4},
memo = {url = {https://doi.org/10.1007/978-3-642-78240-4}}
}
\end{document}